\newcommand{\D}{\mathbb{D}}
\newcommand{\E}{\mathbb{E}}
\newcommand{\F}{\mathscr{F}}
\newcommand{\N}{\mathbb{N}}
\newcommand{\Om}{\Omega}
\newcommand{\R}{\mathbb{R}}
\newcommand{\dist}{\operatorname{dist}}
\newcommand{\eps}{\varepsilon}
\newcommand{\f}{\mathfrak{f}}
\newcommand{\br}{\operatorname{br}}
\newcommand{\loc}{\operatorname{loc}}
\newcommand{\GWI}{\operatorname{GWI}}
\newcommand{\CBI}{\operatorname{CBI}}
\newcommand{\h}{\boldsymbol{\operatorname{ht}}}
\newcommand{\csn}{\boldsymbol{\operatorname{csn}}}
\newcommand\cev[1]{\overleftarrow{#1}}
\newcommand{\upto}{\uparrow}
\begin{document}

\title{The Gorin-Shkolnikov identity and its random tree generalization
}


\author{David Clancy, Jr.
}


\institute{D. Clancy, Jr.\\
	Dept. of Mathematics, University of Washington \\
              \email{djclancy@uw.edu}           
}

\date{\today}

\maketitle

\begin{abstract}
In a recent pair of papers Gorin and Shkolnikov \cite{GS_paper} and  Hariya \cite{Hariya} have shown that the area under normalized Brownian excursion minus one half the integral of the square of its total local time is a centered normal random variable with variance $\frac{1}{12}$. Lamarre and Shkolnikov generalized this to Brownian bridges \cite{GLS_RBB}, and ask for a combinatorial interpretation. We provide a combinatorial interpretation using random forests on $n$ vertices. In particular, we show that there is a process level generalization for a certain infinite forest model. We also show analogous results for a variety of other related models using stochastic calculus.
\keywords{Brownian excursion\and Continuous state branching processes \and L\'evy Process \and Lamperti Transform \and Galton-Watson branching processes \and Jeulin's identity\ \and Continuum random trees.}
\subclass{60F17 \and 60J55 \and 60J80}
\end{abstract}

	\section{Introduction}	
Let $e = (e_t;t\in[0,1])$ denote a standard Brownian excursion, or, equivalently, a 3-dimensional Bessel bridge from 0 to 0 of unit duration, see \cite[Chapter XII]{RY}. This process is a semi-martingale with quadratic variation $t$, and so possesses a family of local times $L = (L_t^v; v\ge 0, t\in[0,1])$ which satisfies almost surely the occupation time formula:
\begin{equation*}
\int_0^t g(e_r)\,dr = \int_0^\infty g(v) L_t^v\,dv, \qquad \forall g\text{ bounded Borel measurable}. 
\end{equation*} In this particular case, there exists a jointly continuous modification of $L$ for which the above formula holds almost surely.

Using topics in random matrix theory Gorin and Shkolnikov \cite{GS_paper} obtained, as a corollary of one of their main results, the following interesting distributional identity:
\begin{equation} \label{eqn:gs_iden}
\int_0^1 e_t\,dt - \frac{1}{2} \int_0^\infty \left(L_1^v\right)^2\,dv \overset{d}{=} \mathscr{N}\left( 0,\frac{1}{12}\right).
\end{equation} 

Gorin and Shkolnikov were studying tri-diagonal matrices of the form \begin{equation}\label{eqn:MN}
M_N^\beta = \frac{1}{\sqrt{\beta}} \begin{pmatrix}
\sqrt{2}g_1 & \chi_{(N-1)\beta}&0 &\dots&\\
\chi_{(N-1)\beta} & \sqrt{2} g_2 & \chi_{(N-2)\beta} &0 &\dotsm\\
0& \chi_{(N-2)\beta}& \sqrt{2} g_3 & \ddots\\
\vdots& &\ddots &\ddots \\
&&&&\chi_{\beta}\\
&&&\chi_{\beta}&\sqrt{2}g_N
\end{pmatrix},\qquad \text{for } \beta>0, N\ge 1,
\end{equation}
where $\chi_{\beta},\chi_{2\beta},\dotsm, \chi_{(N-1)\beta}$ are independent $\chi$-distributed random variables which are indexed by their parameters and $g_1,\dots,g_N$ are independent standard normal random variables. The ordered eigenvalues, $\lambda_1^{(N)}\ge \lambda_2^{(N)}\ge\dotsm\ge \lambda_{N}^{(N)}$, of $M_N^\beta$ in \eqref{eqn:MN} are of particular interest for their connection to $\beta$-ensembles. The celebrated work of Dumitriu and Edelman \cite{DE_beta} shows that joint distribution of the eigenvalues has a density proportional to 
\begin{equation*}
\prod_{1\le i< j\le N} (x_i-x_j)^\beta \prod_{k=1}^N e^{-\beta x_k/4}.
\end{equation*} 

Ram\'{i}rez, Rider and Vir\'{a}g \cite{RRV.11} relate a rescaling and recentering of largest eigenvalues, i.e. the \textit{edge}, of $M^\beta_N$ to the eigenvalues of the so-called stochastic Airy operator. In particular they show for each fixed $k = 1,2,\dotsm$ that
\begin{equation*}
\Lambda_{j,N}:= N^{1/6} \left(2\sqrt{N} - \lambda_j^{(N)}\right), \qquad j\in \{1,2,\dotsm ,k\}
\end{equation*} converge jointly in distribution as $N\to\infty$ to eigenvalues $\Lambda_1\le \Lambda_2\le \dotsm$ of the operator
\begin{equation*}
\mathscr{H}^\beta f:= \left(-\frac{d^2}{dx^2} + x + \frac{2}{\sqrt{\beta}} W'_x \right)f, \qquad f(0) =0 ,\qquad f\in L^2(\R_+)
\end{equation*} where $W'$ is a white noise on $\R_+:=[0,\infty)$. In turn, Gorin and Shkolnikov showed the powers of the matrix $\frac{1}{2\sqrt{N}}M_N^\beta$ converge in a certain operator-theoretic sense to the semigroup generated by $\frac{1}{2}\mathscr{H}^\beta$. See Theorem 2.1 in \cite{GS_paper} for a proper formulation of this convergence.

Proposition 2.7 in \cite{GS_paper} relates the eigenvalues of $\mathscr{H}^\beta$ to the functional in \eqref{eqn:gs_iden} by
\begin{equation*}
\E\left[\sum_{j \ge 1} e^{T \Lambda_j /2} \right] = \sqrt{\frac{2}{\pi T^3}} \E \left[\exp\left(-\frac{T^{3/2}}{2} \left( \int_0^1e_t\,dt - \frac{1}{\beta} \int_0^\infty\left( L_1^v\right)^2\,dv\right) \right)\right].
\end{equation*} By comparing the Laplace transform in the right-hand side with the prior work of Okounkov \cite{Okounkov_Moduli} when $\beta = 2$, Gorin and Shkolnikov were able to prove the equality in distribution \eqref{eqn:gs_iden}.

Lamarre and Shkolnikov \cite{GLS_RBB} connected a \textit{spiked} version of the matrix model in \cite{GS_paper} to a reflected Brownian bridge as well. If we let $B^{|\br|} = (B^{|\br|}_t;t\in [0,1])$ denote a reflected Brownian bridge and let $L = (L_t^v; v\ge 0, t\in[0,1])$ denote its local time, then the work of \cite{GLS_RBB} relates eigenvalues of a spiked operator to the random variable
\begin{equation}\label{eqn:Abeta}
A_\beta = \sqrt{12}\left( \int_0^1 B^{|\br|}_t \,dt - \frac{1}{\beta} \int_0^\infty \left(L_1^v\right)^2\,dv\right).
\end{equation} Lamarre and Shkolnikov are able to give an explicit formulation of the moment generating function of $A_\beta$ only in the case $\beta =2$, which evaluates to 
\begin{equation*}
\E\left[A_2^{2n-1} \right] = -\frac{2^n(2n-1)!}{4(n-1)!}\sqrt{6\pi},\qquad n = 1,2,\dotsm,7.
\end{equation*} This leads Lamarre and Shkolnikov ``to believe that $A_2$ admits an interesting combinatorial interpretation." This work is devoted to giving one such combinatorial interpretation using random trees or random forests which can be generalized to the infinite forest models of \cite{Aldous_AF,Duquesne_CRTI}. We should mention that there are many relationships between the area of a Brownian excursion and limits appearing in enumerative combinatorics. For a good survey on such connections, Janson's survey \cite{Janson_ExcursionArea} is an excellent resource.

Shortly after Gorin and Shkolnikov posted their work on the arXiv, Hariya \cite{Hariya} gave a path-wise interpretation and proof of the normality result \eqref{eqn:gs_iden}. Hariya's proof relies heavily on the Jeulin identity \cite{Jeulin_Iden}. Define 
\begin{equation*}
H(x) = \int_0^x L_1^y\,dy = \int_0^1 1_{[e_t\le x]} \,dt,
\end{equation*} and let $H^{-1} = (H^{-1}(t);t\in[0,1])$ denote its right-continuous inverse. Jeulin's identity is the following identity in distribution
\begin{equation*}
\left(\frac{1}{2} L_1^{H^{-1}(t)};t\in[0,1]\right) \overset{d}{=} \left( e_t; t\in[0,1]\right).
\end{equation*} Lamarre and Shkolnikov \cite{GLS_RBB} use Hariya's idea and some results of Pitman \cite{Pitman_SDE} to show 
\begin{equation}\label{eqn:AbetaCond}
\left(\frac{1}{\sqrt{12}}A_2 \bigg| L_1^0 = x\right) \overset{d}{=} \mathscr{N}\left(-\frac{x}{4} , \frac{1}{12} \right).
\end{equation}
For more information on this time-change including its proof, see Jeulin's original work \cite[pg. 264]{Jeulin_Iden} or a random tree interpretation and generalization in \cite{AMP_Jeulin,M_SST, Pitman_SDE}. The paper \cite{AMP_Jeulin} conveys this transformation succinctly: The Jeulin identity ``can be roughly interpreted as the width of the layer of the tree containing [a] vertex [...] where the vertices are labeled [...] in breadth-first order."

Because of their connection to random matrix theory and the study of the stochastic Airy semigroup, the distributional properties of the random variables $A_\beta$ in \eqref{eqn:Abeta} and the random variables
\begin{equation*}
\int_0^1 e_t \,dt - \frac{1}{\beta} \int_0^\infty\left( L_1^v\right)^2\,dv
\end{equation*} for each $\beta>0$ are of interest. Apart from the case $\beta=2$, where tools from stochastic calculus are useful \cite{Hariya,GLS_RBB}, it is not obvious what approach to studying these random variables will be fruitful. In this article, we present a discrete forest model in order to understand these random variables which can yield a wide class of results involving the difference of an integral of a stochastic process and a constant multiple of an integral of its squared local time. See Corollaries \ref{cor:crtHeightIntro} and \ref{cor:crtHeight} below. Some of these techniques hold even outside of a Brownian regime and \textit{do not} rely on the stochastic calculus techniques of \cite{Hariya,GLS_RBB}. However, these stochastic calculus techniques can be used when we are inside the Brownian regime. We develop the techniques in \cite{Hariya,GLS_RBB} further with Theorems \ref{thm:gaussianStructure} and \ref{thm:bridgeLT}.

\subsection{Random tree and branching process interpretation}

We begin by giving the discrete interpretation of the Gorin-Shkolnikov identity in \eqref{eqn:gs_iden} and its generalization in \eqref{eqn:AbetaCond} in terms of two statistics on a random forests. We then show that these same statistics have a functional limit for a wide class of random forest models. 

Consider the vertex set $[n]:=\{1,2,\dotsm, n\}$. By a forest on $[n]$, we mean a cycle-free graph on the vertices $[n]$. We say that a forest $\f$ is \textit{rooted} if each connected component has a distinguished vertex called its root. Roots will be denoted by the letter $\rho$. We equip the forests with the graph distance, denoted by $\dist(-,-)$. Given a forest $\f$ on $[n]$ we can define two statistics on the graph, measuring height and width. The first statistic, denoted by $\h$, is the height of the vertex, i.e. the distance from the root. The second statistic counts the number of ``cousin'' vertices, i.e. vertices at the same height; we denote this by $\csn$. If $v$ is in the same component as the root $\rho$, then the two statistics are
\begin{equation}\label{eqn:htandcsn}
\h(v) = \dist(v, \rho),\qquad \csn(v) = \#\{w\in \f: \h(v) = \h(w), v\neq w\}.
\end{equation} The statistics are portrayed graphically in Figure \ref{fig:treestats} below.

\begin{figure}[h]
	\centering
	\includegraphics[width=0.3\linewidth]{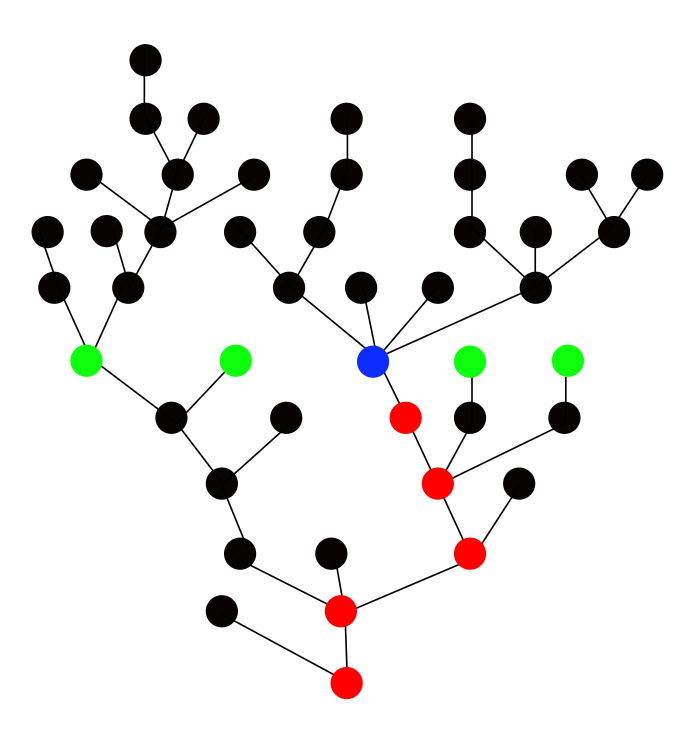}
	\caption{A pictorial representation of $\h$ and $\csn$. The green vertices are the cousins of the blue vertex, and the number of red vertices represents the height of the blue vertex. In this example, $\h(v) = 5$ and $\csn(v) = 4$ where $v$ is the blue vertex.}
	\label{fig:treestats}
\end{figure}

The following theorem describes the scaling relationship. 
\begin{theorem}\label{thm:excurForest}
	Fix a sequence $k_n$ such that $2k_n/\sqrt{n}\to x \ge 0$. For each $n$, let $\f^{(n)}$ denote a rooted forest on the vertex $[n]$ with $k_n$ roots uniformly chosen among all such forests. Then the following weak convergence holds as $n\to\infty$
	\begin{equation*}
	\frac{1}{2 n^{3/2} } \sum_{v\in \f^{(n)}} \h(v) - \frac{1}{n^{3/2}} \sum_{v\in \f^{(n)}} \csn(v) \overset{(d)}{\Longrightarrow} \mathscr{N}\left(\frac{-x}{4} , \frac{1}{12} \right).
	\end{equation*}
\end{theorem}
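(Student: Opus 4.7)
The plan is to interpret the left-hand side as a discrete analog of $\int_0^1 B^{|\br|}_t\,dt - \tfrac{1}{2}\int_0^\infty (L_1^v)^2\,dv$ and then invoke the bridge version of the Gorin-Shkolnikov identity \eqref{eqn:AbetaCond}. By a classical encoding going back to Borel and emphasized by Pitman \cite{Pitman_SDE}, a uniformly random rooted forest on $[n]$ with $k_n$ roots has the same distribution as a Galton-Watson forest with $k_n$ roots and Poisson$(1)$ offspring distribution (critical with variance $\sigma^2=1$), conditioned on having total progeny $n$. I would let $(H_n(i))_{0\le i\le n}$ be the depth-first height process obtained by concatenating the height processes of the $k_n$ component trees, and set $n_h=\#\{i:H_n(i)=h\}$ to be its discrete local time profile, so that $n_0=k_n$. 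Two elementary identities then give
\[ \sum_{v\in\f^{(n)}}\h(v)=\sum_{i=1}^{n} H_n(i),\qquad \sum_{v\in\f^{(n)}}\csn(v)=\sum_{h\ge 0}n_h(n_h-1)=\sum_{h\ge 0}n_h^2-n, \]
where the $-n$ is asymptotically harmless, contributing only $-1/\sqrt n$ to the rescaled sum.

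The next step is to invoke the functional scaling limit for the height process of such critical GW forests, following Aldous \cite{Aldous_AF} and Duquesne \cite{Duquesne_CRTI}: under $2k_n/\sqrt n\to x$,
\[ \bigl(\tfrac{1}{2\sqrt n}H_n(\lfloor nt\rfloor)\bigr)_{t\in[0,1]}\overset{(d)}{\Longrightarrow}(B^{|\br|}_t)_{t\in[0,1]}, \]
where $B^{|\br|}$ is the reflected Brownian bridge conditioned on $L_1^0=x$. The Aldous normalization factor $2/\sigma=2$ is precisely what matches the hypothesis $2k_n/\sqrt n\to x$ to the limiting value of $L_1^0$, since $k_n=n_0\approx\tfrac{\sqrt n}{2}L_1^0$. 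I would then promote this to joint convergence of the rescaled local time profile $\tfrac{2}{\sqrt n}n_{\lfloor 2v\sqrt n\rfloor}\to L_1^v$ uniformly on compact sets, using moment bounds for the Galton-Watson level profile.

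With these convergences in hand, Riemann sums based on the discrete occupation-density identity $\sum_i g(H_n(i))=\sum_h g(h)n_h$ yield
\[ \frac{1}{n^{3/2}}\sum_{i=1}^n H_n(i)\overset{(d)}{\Longrightarrow} 2\int_0^1 B^{|\br|}_t\,dt,\qquad \frac{1}{n^{3/2}}\sum_{h\ge 0} n_h^2\overset{(d)}{\Longrightarrow}\tfrac{1}{2}\int_0^\infty (L_1^v)^2\,dv \]
jointly. Combining with the elementary identities above, the left-hand side of the theorem converges in distribution to
\[ \int_0^1 B^{|\br|}_t\,dt-\tfrac{1}{2}\int_0^\infty (L_1^v)^2\,dv=\frac{A_2}{\sqrt{12}}, \]
which, under $L_1^0=x$, equals $\mathscr{N}(-x/4,1/12)$ by \eqref{eqn:AbetaCond}.

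The main obstacle I anticipate is upgrading the Skorohod convergence of the height path to something strong enough to pass to the limit inside $\sum_h n_h^2$: this functional is discontinuous in the uniform topology on paths, being sensitive to both $\max_i H_n(i)$ and the tails of the local time profile. I would address this either by direct second-moment estimates on $n_h$ built from the branching structure, or via tail bounds on the excursion heights obtained from the Lukasiewicz random-walk encoding, in order to secure the uniform integrability needed to pass to the limit in the square-integral and complete the argument.
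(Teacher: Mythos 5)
Your proposal is in the right spirit and reaches the same target distribution via the same two elementary identities ($\sum_v\h(v)=\sum_h h\,n_h$ and $\sum_v\csn(v)=\sum_h n_h^2 - n$), but it routes the convergence argument differently than the paper does, and the detour is precisely where your acknowledged gap lives. You propose first establishing convergence of the depth-first height process to a reflected Brownian bridge, and then ``promoting'' this to convergence of the level profile $\tfrac{2}{\sqrt n}n_{\lfloor 2v\sqrt n\rfloor}\to L_{1,x}^v$. That promotion is nontrivial: Skorohod convergence of the height path does not by itself control the local time profile, and you correctly flag that second-moment or tail estimates would be needed. The paper avoids this entirely by starting from the profile convergence, which is already a theorem: it cites Theorems 4 and 7 of Pitman \cite{Pitman_SDE} (a conditional refinement of Drmota and Gittenberger's results \cite{DG_Bridge,DG_profile}) to get
\begin{equation*}
\left(\frac{2}{\sqrt{n}} Z^{*,n}_{[2\sqrt{n}v]};v\ge0 \right)\overset{(d)}{\Longrightarrow} \left(L_{1,x}^v;v\ge 0\right)
\end{equation*}
directly in the Skorohod space, and then applies the occupation-time Riemann-sum identities and \eqref{eqn:AbetaCond} exactly as you do. Also note that the paper proves a slightly stronger statement (a joint $\beta$-parametrized version in its Proposition of Section \ref{sec:someProofs}) and then specializes to $\beta=2$ via Theorem 1.13 of \cite{GLS_RBB}.

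So the concrete gap in your write-up is the unproved passage from height-process convergence to local-time-profile convergence; rather than reconstructing it from tail and moment bounds, you should invoke the known profile scaling limit for conditioned critical Galton--Watson forests (Pitman; Drmota--Gittenberger), after which the rest of your argument, including the Riemann-sum computations and the appeal to \eqref{eqn:AbetaCond}, matches the paper's proof. One additional care point either way: both you and the paper take $\int_0^\infty(\cdot)^2\,dv$ of the rescaled profile; justifying convergence of this integral functional still needs a word about tail control of the tree heights (the profile has random compact support growing like $\sqrt n$), a point that your proposal anticipates but the profile-convergence theorems effectively package for you.
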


The above theorem relies on the connection between random trees and forests and excursions and reflected Brownian bridges in the literature. See, for example, \cite{Aldous_CRT1, Pitman_SDE} and references therein for more details on this connection. It also relies heavily on the extension of Jeulin's identity implicit in the work of Pitman \cite{Pitman_SDE}, which relates the local time of a reflected Brownian bridge with a time-change of a 3-dimensional Bessel bridge. We discuss some extensions of the stochastic calculus approach used in \cite{Hariya,GLS_RBB} below. Before moving to that section, we discuss a generalization using branching processes.

In the study of branching processes, there is a breadth-first model similar to the Jeulin identity. It is called the Lamperti transform originating in the work \cite{Lamperti_CSBP1}. Consider a genealogical structure with immigration depicted in Figure \ref{fig:intro} below.

\tikzset{
	treenode/.style={align=center, inner sep=0pt},
	node_black/.style={treenode, circle, white, draw=black, fill=black,  minimum size = .4cm},
	node_white/.style={treenode, circle, black, draw=black, minimum size = .4cm}
}
\begin{figure}[h!] 
	\begin{tikzpicture}[grow' = up, - ,level/.style = {sibling distance =.9 cm, level distance=.8cm}]
	\node {height 0}
	child{ node {height 1}  edge from parent[draw=none]
		child{ node {height 2}  edge from parent[draw=none]
			child{ node {height 3}  edge from parent[draw=none]}
		}	
	};
	\end{tikzpicture}
	\begin{tikzpicture}[grow' = up, - ,level/.style = {sibling distance =.9 cm, level distance=.8cm}]
	\node[node_black] {0}
	child{ node[node_black]{4} }
	;
	\end{tikzpicture}
	\begin{tikzpicture}[grow' = up, - ,level/.style = {sibling distance =.9 cm, level distance=.8cm}]
	\node[node_black]{1}
	child{ node[node_black]{5}
		child{ node[node_black]{13} }
		child{ node[node_black]{14} 
			child{ node[node_black]{20} }
		}
	};
	\end{tikzpicture}
	\begin{tikzpicture}[grow' = up, - ,level/.style = {sibling distance =.9 cm, level distance=.8cm}]
	\node[node_black] {2}
	child{ node[node_black]{6} 
		child{ node[node_black] {15}}
	}
	child{ node[node_black]{7} }
	child{ node[node_black]{8} 
		child{ node[node_black]{16} 
			child{ node[node_black]{21} }
			child{ node[node_black]{22} }
		}
	}
	;
	\end{tikzpicture}
	\begin{tikzpicture}[grow' = up, - ,level/.style = {sibling distance =.9 cm, level distance=.8cm}]
	\node[node_black] {3}
	child{ node[node_black]{9} 
		child{ node[node_black]{17} }
	}
	child {node[node_black]{10} }
	;
	\end{tikzpicture}
	\begin{tikzpicture}[grow' = up, - ,level/.style = {sibling distance =.9 cm, level distance=.8cm}]
	\node[node_white] {}
	child{ node[node_black]{11} }
	child{ node[node_black]{12} 
		child{ node[node_black]{18}}
		child{ node[node_black]{19}}
	}
	child{ node[node_white]{}
		child{ node[node_white]{}
			child{ node[node_black]{23} }
			child{ node[node_white]{} }	
		}
	}
	;
	\end{tikzpicture}
	\caption{The indices of the breadth-first labeling on an immigration forest of 24 non-mutant vertices.}
	\label{fig:intro}
\end{figure}
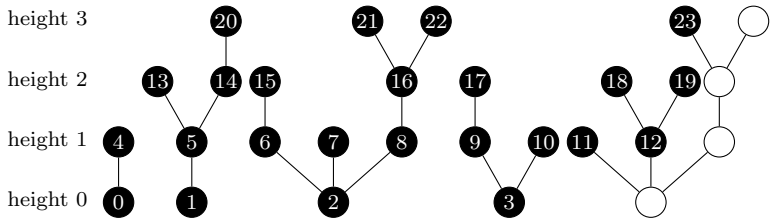

In this picture, the vertices (or individuals) come in two types. Using the language in \cite{Duquesne_CRTI}, the \textit{mutant} individuals are the unlabeled white vertices and the \textit{non-mutant} vertices are the labeled black vertices. The mutant vertices are simply a convenient way to introduce immigration and play little to no role in much of our analysis. The non-mutant vertices are labeled by $w_0,w_1,\dotsm$ in a breadth-first order with the convention that the immigrant vertices are labeled last in each generation. The mutant vertices are unlabeled. Let $\chi_j$ denote the number of children of the vertex $w_j$, and let $\eta_h$ denote the number of immigrants which arrive at height (or generation) $h$. In Figure \ref{fig:intro}, the forest has $4$ non-mutant individuals at height 0, the sequence of $\chi_j$'s begins $\chi_0 = 1$, $\chi_1 = 1$, $\chi_2 = 3$, etc. and the sequence of $\eta_h$'s begins $\eta_1 = 2$, $\eta_2 = 0$, and $\eta_3 = 1$. Conversely, given a number $k$ of non-mutant vertices at height $0$ (i.e. the roots) and two sequence of non-negative integers $(\chi_j; j =0,1 ,\dotsm)$ and $(\eta_h; h= 1,2,\dotsm)$ one can inductively construct a forest like the one in Figure \ref{fig:intro}. See Section \ref{sec:forest} for more information.

If we let $c_h$ denote the number of non-mutant vertices appearing at or before height $h$, then with the conventions $c_{-1} = 0$ the vertices at generation exactly $h$ are indexed by $c_{h-1} ,c_{h-1}+1,\dotsm, c_h -1$.  From here it follows
\begin{equation*}
c_{h+1} = c_h + \sum_{j={c_{h-1}}}^{c_h - 1} \chi_j + \eta_{h+1}.
\end{equation*} Let $z_h$ denote the number of non-mutant vertices at height exactly $h$, let $x_m = \sum_{j=0}^{m-1} (\chi_j-1)$ and let $y_h = \sum_{j=1}^h \eta_j$. As observed in \cite{CGU_CSBPI}, we can recover the successive generation sizes by solving the discrete equation
\begin{equation*}%
\label{eqn:discL}
z_{h+1} = k + x_{c_h} + y_{h+1} ,\qquad c_h = \sum_{j=0}^h z_j.
\end{equation*}
This is the \textit{discrete Lamperti transform}, which was rigorously studied in \cite{CGU_CSBPI} and generalized by the same authors in \cite{CPU_affine}. The authors described continuous time analogs and developed robust limit theorems. In particular, the authors of \cite{CGU_CSBPI} show that if $X = (X_t;t\ge0)$ is a L\'evy process with no negative jumps and $Y = (Y_t;t\ge 0)$ is an independent subordinator with Laplace transforms satisfying
\begin{equation*}
\E[\exp(-\lambda X_t)] = \exp(t\Psi(\lambda)),\qquad \E[\exp(-\lambda Y_t)] = \exp(-t\Phi(\lambda)) \,\qquad \lambda>0
\end{equation*} then there exists a unique c\`{a}dl\`{a}g solution to the equation
\begin{equation}\label{eqn:lamp}
Z_t = x+ X_{C_t} + Y_t,\qquad C_t = \int_0^t Z_s\,ds,
\end{equation} which exists until some possible explosion time. Moreover the process $Z = (Z_t;t\ge 0)$ is a continuous state branching process started from $x$ with branching mechanism $\Psi$ and immigration rate $\Phi$. We denote the law of $Z$ solving \eqref{eqn:lamp} for these L\'{e}vy processes $X$ and $Y$ by $\CBI_x(\Psi,\Phi)$. See Section \ref{sec:CBI} for more details on these processes. However, throughout this work we will work under the following assumptions on $\Psi$ and $\Phi$:
\begin{ass}\label{ass:lt}
	The function $\Psi$ is of the form
	\begin{equation*}
	\Psi(\lambda) = \alpha \lambda + \beta \lambda^2 + \int_{(0,\infty)} (e^{-\lambda r} - 1 + \lambda r 1_{[r<1]}) \,\pi(dr),
	\end{equation*} for some $\alpha\in \R$, $\beta\ge 0$ and a Radon measure $\pi$ such that $(1\wedge r^2)\pi(dr)$ is a finite measure.
	We assume that $\Psi$ in addition is \textit{conservative}, meaning \cite{Grey_explosion}
	\begin{equation}\label{eqn:cons}
	\int_{0}^\eps \frac{1}{|\Psi(u)|} \,du = \infty,\qquad \forall \eps>0.
	\end{equation}
	The subordinator $Y = (Y_t;t\ge 0)$ is $Y_t = \delta t$ for all $t\ge 0$, equivalently $\Phi(\lambda) = \delta\lambda$. \qed
\end{ass}

The first part of Assumption \ref{ass:lt} is equivalent to $X$ being a real-valued L\'{e}vy process which has no negative jumps and is not killed, see \cite{Bertoin_LP}. The second part of Assumption \ref{ass:lt} is in some sense the most general assumption on $\Psi$ that we can make for weak convergence arguments, see \cite{KW_BPI}.

As briefly mentioned above, we can construct a forest, $\f$, from two sequences $(\chi_j; j=0,1,\dotsm)$ and $(\eta_h; h =1,2,\dotsm)$ along with a non-negative integer $k$. We say that $\f$ is a \textit{Galton-Watson immigration forest} with offspring distribution $\mu$ and immigration distribution $\nu$ starting from $k$ roots if the sequences $(\chi_j)\overset{i.i.d.}{\sim}\mu$ and $(\eta_h)\overset{i.i.d.}{\sim} \nu$ and there are $k$ non-mutant individuals at height $0$. We use the notation $\f\sim \GWI_k(\mu,\nu)$. We also label the non-mutant vertices by $w_0,w_1,\dotsm$ in a breadth-first manner. See Section \ref{sec:forest} and Definition \ref{def:GWI} therein for a more in-depth description of this model.

We can again define two statistics, $\h$ and $\csn$ by \eqref{eqn:htandcsn}. We will examine the following two processes,
\begin{equation}\label{eqn:KJdef}
K_p^\f = \sum_{j=0}^{p-1} \csn(w_j),\qquad \text{and}\qquad J_p^\f = \sum_{j=0}^{p-1} \h(w_j).
\end{equation} We refer to $K^\f = (K^\f_p ; p = 0,1,\dotsm)$ as the cumulative breadth-first cousin process and $J^\f = (J^\f_p;p =0,1,\dotsm)$ as the cumulative breadth-first height process. 

We will be concerned about distributional limits of the processes $K^\f$ and $J^\f$, for sequences of forests $(\f^{(n)}; n \ge 1)$. For ease of notation we write $K^{\ast, n}$ instead $K^{\f^{(n)}}$, along with similar notation shifts for other processes.

Let $(\mu_n ;n \ge 1)$ and $(\nu_n; n\ge 1)$ be any sequence of probability measures on $\N_0 = \{0,1,\dotsm\}$ which satisfy Assumption \ref{ass:1} below.

\begin{ass}\label{ass:1}
	There exists a sequence $(\gamma_n; n \ge 1)$ of non-negative integers and a real-valued L\'{e}vy process $X = (X_t;t\ge 0)$ with non-negative jumps such that its negative Laplace exponent $\Psi$ is conservative (i.e. satisfies \eqref{eqn:cons}). We assume
	\begin{equation*}
	\frac{1}{n} \sum_{j=0}^{n\gamma_n -1} (\chi_j^{\ast,n} -1) \overset{(d)}{\Longrightarrow} X_1,\qquad \text{as }n\to \infty,
	\end{equation*} where $(\chi_j^{*,n}; j \ge 1)$ are i.i.d. with common distribution $\mu_n$. Simultaneously, for the same sequence $\gamma_n$, we suppose 
	\begin{equation*}
	\frac{1}{n} \sum_{j=1}^{\gamma_n} \eta_j^{\ast,n} \overset{p}{\longrightarrow} \delta,
	\end{equation*} for some non-random constant $\delta>0$ where $(\eta_j^{\ast,n};j\ge 1)$ are i.i.d. with common distribution $\nu_n$.
\end{ass}

There are many examples of sequences of distributions $(\mu_n;n\ge 1)$ and $(\nu_n;n\ge 1)$ which satisfy Assumption \ref{ass:1}. Some examples are included below. In the following examples and throughout the work, we write $[x]$ for the integer part of a real number $x$.
\begin{enumerate}
	\item For any $\mu_n = \mu$ with mean 1 and variance $\sigma^2<\infty$ and $\nu_n = \nu$ with mean $\delta>0$,
	\begin{equation*}
	\frac{1}{n} \sum_{j=0}^{n^2-1} \left(\chi_j^{\ast,n} - 1\right) \overset{(d)}{\Longrightarrow} \mathscr{N}(0,\sigma^2) ,\qquad\text{and}\qquad \frac{1}{n} \sum_{j=1}^{n} \eta_j^{\ast,n}\overset{p}{\longrightarrow} \delta,
	\end{equation*} by the central limit theorem and the weak law of large numbers. Hence, Assumption \ref{ass:1} holds for $\gamma_n = n$ and $X_t = \sigma B_t$ for a standard Brownian motion $B$ (i.e. $\Psi(\lambda) = \frac{\sigma^2}{2} \lambda^2$).
	\item For a non-centered normal distributions, take a fixed $a\in \R$ with $\mu_n = \operatorname{Poisson}(1+a n^{-1})$ for $n$ sufficiently large, $\nu_n = \nu$ having mean $\delta>0$ and $\gamma_n = n$, then
	\begin{equation*}
	\frac{1}{n} \sum_{j=0}^{n^2-1} \left(\chi_j^{*,n}-1\right) \overset{(d)}{\Longrightarrow} \mathscr{N}(a, 1),\qquad\text{and}\qquad \frac{1}{n}\sum_{j=1}^n \eta_j^{*,n} \overset{p}{\longrightarrow} \delta.
	\end{equation*} This corresponds to $X_t = B_t + a t$ and $\Psi(\lambda) = - a \lambda  +\frac{1}{2}\lambda^2$.
	\item Outside of a Brownian regime, we can take $\mu_n = \mu$ where $\lim_{k\to\infty} k^{1+\alpha}\mu(k) = c>0$ for some $\alpha\in(1,2)$ and $\sum_{k =0}^\infty k \mu(k) = 1$. Then, for $\gamma_n = [n^{\alpha-1}]$ we have
	\begin{equation*}
	\frac{1}{n} \sum_{j=0}^{n\gamma_n -1} \left(\chi_j^{*,n} -1 \right) \overset{(d)}{\Longrightarrow} X_1,
	\end{equation*} where $X$ is a spectrally positive $\alpha$ stable L\'{e}vy process and $\Psi(\lambda) = \tilde{c}\lambda^{\alpha}$ when $\lambda\ge 0$ for some constant $\tilde{c}$. See, for example, \cite[Section 3.7]{Durrett_PTE}. Taking, for example, $\nu_n = \operatorname{Poisson}(\delta n^{2-\alpha})$ for some $\delta>0$ implies
	\begin{equation*}
	\frac{1}{n}\sum_{j=1}^{\gamma_n} \eta_j^{*,n} \overset{p}{\longrightarrow} \delta.
	\end{equation*}
\end{enumerate}

We now state the following generalization of the Gorin-Shkolnikov identity, at least as described in the Theorem \ref{thm:excurForest} interpretation of the result. 
\begin{theorem}\label{thm:weakConvergence}
	Suppose that $\mu_n$ and $\nu_n$ satisfy Assumption \ref{ass:1} and fix some $x\ge 0$. For each $n$, let $\f^{(n)}$ denote a $\GWI_{[nx]}(\mu_n,\nu_n)$ forest. Let $K^{*,n}$ (resp. $J^{\ast,n}$) denote the cumulative breadth-first cousin (resp. height) process. Then, in the Skorohod space $\D(\R_+,\R)$ the following weak convergence holds
	\begin{equation*}
	\left(\frac{\delta}{n\gamma_n^2} J^{*,n}_{[n\gamma_nt]} - \frac{1}{n^2\gamma_n} K^{*,n}_{[n\gamma_nt]} ; t \ge 0 \right) \overset{(d)}{\Longrightarrow} \left(-xt - \int_0^t X_u\,du; t\ge 0 \right),
	\end{equation*} where $X$ is the L\'{e}vy process in Assumption \ref{ass:1}.
\end{theorem}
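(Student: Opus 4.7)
The plan is to express $J^{*,n}$ and $K^{*,n}$ in terms of the generation sizes of $\f^{(n)}$, invoke the discrete-to-continuous Lamperti convergence of \cite{CGU_CSBPI}, and then reduce the identification of the limit to a substitution in the continuous Lamperti equation. First, I would write $z^{(n)}_h := c^{(n)}_h - c^{(n)}_{h-1}$ for the number of non-mutant vertices of $\f^{(n)}$ at height $h$. Since the breadth-first labeling assigns labels $c^{(n)}_{h-1},\dots,c^{(n)}_h - 1$ to the non-mutant vertices at height $h$, whenever $p = c^{(n)}_h$ we have
$$J^{*,n}_p = \sum_{h'=0}^{h-1} h'\, z^{(n)}_{h'}, \qquad K^{*,n}_p = \sum_{h'=0}^{h-1} z^{(n)}_{h'}\bigl(z^{(n)}_{h'} - 1\bigr).$$
For $p$ in the interior of generation $h$, the additional boundary contribution is $O(h z^{(n)}_h) = O(n\gamma_n)$ for $J$ and $O((z^{(n)}_h)^2) = O(n^2)$ for $K$; after dividing by $n\gamma_n^2$ and $n^2\gamma_n$, these corrections are $O(1/\gamma_n)$ and vanish uniformly on compacts.

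Next I would invoke the Lamperti transform convergence from \cite{CGU_CSBPI}: Assumption~\ref{ass:1}, combined with classical functional limit theorems for i.i.d.\ sums, gives joint Skorohod convergence of the driving walks
$$X_n(t) := \tfrac{1}{n} \sum_{j=0}^{[n\gamma_n t]-1} (\chi^{*,n}_j - 1), \qquad Y_n(t) := \tfrac{1}{n} \sum_{j=1}^{[\gamma_n t]} \eta^{*,n}_j$$
to $(X_t, \delta t)$, which by the continuous-time Lamperti transform yields
$$\bigl(\tilde z_n(u), \tilde C_n(u)\bigr) := \left(\tfrac{z^{(n)}_{[\gamma_n u]}}{n}, \tfrac{c^{(n)}_{[\gamma_n u]}}{n\gamma_n}\right) \overset{(d)}{\Longrightarrow} (Z_u, C_u),$$
where $Z\sim\CBI_x(\Psi,\delta\lambda)$ satisfies $Z_u = x + X_{C_u} + \delta u$ with $C_u = \int_0^u Z_s\, ds$. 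Conservativity of $\Psi$ ensures $C$ is finite and strictly increasing, so its inverse $A_t := C^{-1}(t)$ is a.s.\ continuous, and the discrete inverse $A_n(t):=\frac{1}{\gamma_n}\inf\{h: c^{(n)}_h \ge [n\gamma_n t]\}$ converges jointly to $A$.

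Combining these with a Riemann-sum approximation in the variable $u = h'/\gamma_n$, one obtains
$$\frac{\delta}{n\gamma_n^2} J^{*,n}_{[n\gamma_n t]} - \frac{1}{n^2\gamma_n} K^{*,n}_{[n\gamma_n t]} = \delta\int_0^{A_n(t)}\!\! u\,\tilde z_n(u)\,du - \int_0^{A_n(t)}\!\! \tilde z_n(u)^2\,du + o(1)$$
uniformly on compacts, and a continuous-mapping argument identifies the limit as $\delta\int_0^{A(t)} u Z_u\,du - \int_0^{A(t)} Z_u^2\,du$. To match this with $-xt-\int_0^t X_s\,ds$, apply the substitution $s = C_u$, $ds = Z_u\,du$ to $\int_0^t X_s\,ds$ together with the Lamperti equation $X_{C_u} = Z_u - x - \delta u$:
$$\int_0^t X_s\,ds = \int_0^{A(t)}\!\! (Z_u - x - \delta u) Z_u\,du = \int_0^{A(t)}\!\! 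Z_u^2\,du - x\, C(A(t)) - \delta\int_0^{A(t)}\!\! u Z_u\,du,$$
and $C(A(t)) = t$, so rearrangement yields precisely the claimed limit.

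The main difficulty will lie in the continuous-mapping step: in the heavy-tailed regime of Example~3 above, $Z$ has macroscopic positive jumps and $\tilde z_n$ converges only in the Skorohod $J_1$ topology, so one must verify that the functionals $(\zeta,a) \mapsto \int_0^a \zeta(u)^2\,du$ and $(\zeta,a) \mapsto \int_0^a u\,\zeta(u)\,du$ are a.s.\ continuous at the limit point $(Z, A(\cdot))$. This follows since $Z$ is c\`adl\`ag with only upward jumps (so $\zeta \mapsto \zeta^2$ behaves well under $J_1$ limits of spectrally positive processes), $A$ is a.s.\ continuous and strictly increasing, and for fixed $t$ the random time $A(t)$ is a.s.\ not a jump time of $Z$. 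Combined with the boundary estimates from the first step and joint tightness of $(\tilde z_n, A_n)$ in Skorohod space, this completes the convergence.
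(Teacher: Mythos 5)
Your proposal follows essentially the same strategy as the paper: you express $J^{*,n}$ and $K^{*,n}$ at the generation boundaries $p = c_h^{(n)}$ in terms of the profile $z^{(n)}$, establish joint Skorokhod convergence of the rescaled profile, cumulative profile, and its inverse to $(Z, C, C^{-1})$ for a $\CBI_x(\Psi,\Phi)$ process, control the within-generation boundary corrections, and identify the limit via the Lamperti change of variables. The paper packages the final identification as a standalone Lemma \ref{lem:integral1} (proved exactly by your substitution $s = C_u$, $X_{C_u} = Z_u - x - \delta u$), cites \cite[Theorem 1.4]{Duquesne_CRTI} rather than \cite{CGU_CSBPI} for the profile convergence, and organizes the transfer from generation boundaries to general indices by composing with $V^{*,n}$ and invoking Billingsley's random-time-change lemma, but these are presentational variants of the same argument. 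Two small remarks: your sums should run to $h$ rather than $h-1$ when $p = c^{(n)}_h$, since that index is the first label at height $h+1$; and the ``uniformly on compacts'' claim for the boundary correction requires, as you implicitly use later, the joint tightness with $V^{*,n}$ to upgrade the pointwise $O((z^{(n)}_h)^2/n^2\gamma_n)$ bound to a statement about suprema — the paper makes this explicit by bounding the error by $n^{-2}\gamma_n^{-1}\bigl(Z^{*,n}(V^{*,n}_{[n\gamma_n r]})\bigr)^2$ and appealing to the convergence of the composed process.
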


The above theorem relies on the following lemma.
\begin{lemma}\label{lem:integral1}
	Suppose that $Z$ is a $\CBI_x(\Psi,\Phi)$ process, where $\Psi$ and $\Phi$ satisfy Assumption \ref{ass:lt}. Define $V = (V_t; t \ge 0)$ by
	\begin{equation} \label{eqn:vdef}
	V_t = \inf\left\{r\ge 0 : \int_0^r Z_u\,du > t\right\}.
	\end{equation} Then
	\begin{equation*}
	\left(\int_0^{V_t}(Z_u-\delta u)\,Z_u\,du;t\ge 0 \right) \overset{d}{=} \left(xt + \int_0^t X_u\,du ;t\ge 0\right).
	\end{equation*}
\end{lemma}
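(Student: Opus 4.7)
The plan is to combine the pathwise Lamperti representation \eqref{eqn:lamp} with a deterministic change of variables. Under Assumption~\ref{ass:lt}, the process $Z$ can be coupled to a spectrally positive L\'{e}vy process $X$ with Laplace exponent $\Psi$ so that
\[
Z_t = x + X_{C_t} + \delta t,\qquad C_t = \int_0^t Z_s\,ds.
\]
Conservativeness of $\Psi$ ensures that $C$ is a.s.\ finite and continuous on $[0,\infty)$; since $C$ is absolutely continuous with density $Z$, the right-continuous inverse $V$ defined in \eqref{eqn:vdef} satisfies $C_{V_t}=t$ and the change-of-variables identity
\[
\int_0^{V_t} f(u)\,Z_u\,du \;=\; \int_0^t f(V_s)\,ds
\]
for any locally bounded measurable $f$.

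Applying this with $f(u)=Z_u-\delta u$ gives
\[
\int_0^{V_t}(Z_u - \delta u)\,Z_u\,du \;=\; \int_0^t \bigl(Z_{V_s} - \delta V_s\bigr)\,ds.
\]
Evaluating the Lamperti equation at $t = V_s$ yields $Z_{V_s} = x + X_{C_{V_s}} + \delta V_s = x + X_s + \delta V_s$, so the integrand simplifies to $x+X_s$. Substituting back produces $\int_0^{V_t}(Z_u-\delta u)Z_u\,du = xt + \int_0^t X_s\,ds$, which is actually a pathwise identity under the natural Lamperti coupling. The equality in distribution between the two processes in $t$ then follows immediately, since the law of the left-hand side is determined by the law of $Z$ while $X$ produced by the Lamperti transform has precisely Laplace exponent $\Psi$.

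The step I expect to require the most attention is ensuring that the change of variables is valid globally, which rests on $C$ being strictly increasing (equivalently, on $\{u:Z_u=0\}$ having Lebesgue measure zero so that $V$ is continuous and $C_{V_t}=t$ for every $t\ge 0$). When $\delta>0$ the immigration pushes $Z$ off of the origin instantaneously and this is built into the $\CBI$ construction; in the pure $\CB$ case $\delta=0$ one restricts $t$ to lie below the lifetime $C_\infty$. Either way the issue is settled by the explicit constructions in \cite{CGU_CSBPI,CPU_affine}, and the remainder of the argument is the three-line deterministic calculation above, with all of the content of the lemma carried by the Lamperti transform.
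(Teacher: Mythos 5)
Your proof is correct and follows essentially the same route as the paper: both rest on the pathwise Lamperti representation $Z_t = x + X_{C_t} + \delta t$ from \cite{CGU_CSBPI}, followed by the substitution $u = V_s$ (valid since $C$ is a.s.\ continuous and strictly increasing, as the paper isolates in its Lemma \ref{lem:cLem}), and then the simplification $Z_{V_s} - \delta V_s = x + X_s$. Your remark that the identity is actually pathwise under the Lamperti coupling matches the paper's framing, and your discussion of strict monotonicity of $C$ is exactly the content the paper delegates to Lemma \ref{lem:cLem}.
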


It may not be immediately clear that Theorem \ref{thm:weakConvergence} is related to the integral identity in \eqref{eqn:gs_iden}. We hope to illuminate the connection with this example, based on the work by Duquesne \cite{Duquesne_CRTI}. Consider the sequences of measures $(\mu_n; n\ge 1)$ and $(\nu_n; n\ge 1)$ with $\mu_n = \mu =\operatorname{Poisson}(1)$ and $\nu_n= \nu = \operatorname{Poisson(\delta)}$ for each $n\ge 1$. For these measures Assumption \ref{ass:1} is clearly satisfied with the $\gamma_n = n$, and the L\'{e}vy process $X$ a Brownian motion $B$. 

We let $\f^{(n)}\sim \GWI_{[nx]}(\mu,\nu)$. As shown by Duquesne \cite{Duquesne_CRTI}, there exists an encoding (in a depth-first manner) of the non-mutant vertices in $\f^{(n)}$ by a height process $H^{n} = (H^n_j ; j = 0,1,\dotsm)$ such that
\begin{equation*}
\#\{w \in \f^{(n)}: \h(w) = h,\text{  $w$ is non-mutant}\} = \#\{j\ge 0 : H^{n}_j = h\}.
\end{equation*}
Moreover, Duquesne shows that 
\begin{equation*}
\left(\frac{1}{n} H^{n}_{[n^2t]} ;t \ge 0 \right) \overset{(d)}{\Longrightarrow} \left(\cev{H}_t; t \ge 0 \right),
\end{equation*} where 
\begin{equation*}
\cev{H}_t = 2 |W_t| + \frac{1}{\delta} \left(L^0_t(W) - x \right)_+,
\end{equation*} for a Brownian motion $W$ with its local time at level zero and time $t$ being $L^0_t(W)$ and $(x)_+:= \max(0,x)$.
Duquesne \cite{Duquesne_CRTI} also shows that the process $\cev{H}$ possesses a jointly measurable (jointly continuous in this situation \cite[Theorem VI.1.7]{RY}) family of local times $L(\cev{H}) = (L_t^y(\cev{H}); t\ge 0, y\ge 0)$ satisfying the occupation time formula
\begin{equation*}
\int_0^t g(\cev{H}_r)\,dr = \int_0^\infty g(y) L_t^y(\cev{H})\,dy,\qquad \forall g\in C_c(\R_+) \quad \text{a.s.}
\end{equation*} We remark that we \textbf{always} take this definition of local time even if the process is a semi-martingale with quadratic variation not identically $t$, which is the case in this present example. 

Then Theorem \ref{thm:weakConvergence}, Lemma \ref{lem:integral1} and the Ray-Knight theorem \cite{Duquesne_CRTI} imply the following corollary:
\begin{corollary}\label{cor:crtHeightIntro} Define $V = (V_t;t\ge 0)$ by \begin{equation*}
	V_t = \inf\left\{r\ge 0: \int_0^r L_\infty^y(\cev{H})\,dy > t\right\}.
	\end{equation*}
	Then
	\begin{equation}\label{eqn:midPage5FromEJP}
	\left(\delta \int_0^\infty \cev{H}_r 1_{[\cev{H}_r\le V_t]}\,dr - \int_0^{V_t} \left( L_t^y(\cev{H})\right)^2\,dy; t\ge0 \right)\overset{d}{=} \left(-xt - \int_0^t B_s\,ds;t\ge0 \right).
	\end{equation} In particular, for each $t$,
	\begin{equation*}
	\delta \int_0^\infty \cev{H}_r 1_{[\cev{H}_r\le V_t]}\,dr - \int_0^{V_t} \left( L_t^y(\cev{H})\right)^2\,dy \overset{d}{=} \mathscr{N}\left(-xt,\frac{t^3}{3} \right).
	\end{equation*} 
\end{corollary}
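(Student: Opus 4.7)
The proof is structured around three ingredients already available in the excerpt: the occupation time formula for $\cev H$, Duquesne's Ray-Knight-type theorem identifying the total local time process $(L_\infty^y(\cev H); y \ge 0)$ with a $\CBI_x(\Psi,\Phi)$ process $Z$ (where $\Psi(\lambda) = \tfrac12\lambda^2$ and $\Phi(\lambda) = \delta\lambda$ in the Brownian-Poisson setting under consideration), and Lemma \ref{lem:integral1}. The plan is to rewrite both terms on the left-hand side of \eqref{eqn:midPage5FromEJP} as functionals of the CBI $Z$ and then directly appeal to Lemma \ref{lem:integral1}.

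First, apply the occupation time formula with the function $f(y) = y \cdot 1_{[y \le V_t]}$ to get
\begin{equation*}
\int_0^\infty \cev H_r\, 1_{[\cev H_r \le V_t]}\,dr = \int_0^{V_t} y\, L_\infty^y(\cev H)\,dy.
\end{equation*}
Under the Ray-Knight identification $(L_\infty^y(\cev H); y \ge 0) \overset{d}{=} (Z_y; y \ge 0)$, the defining relation $\int_0^{V_t} L_\infty^y(\cev H)\,dy = t$ becomes $\int_0^{V_t} Z_u\,du = t$, so $V_t$ corresponds exactly to the time change $\inf\{r \ge 0 : \int_0^r Z_u\,du > t\}$ appearing in Lemma \ref{lem:integral1}. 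The same Ray-Knight identification transforms the squared local time integral $\int_0^{V_t}\bigl(L^y(\cev H)\bigr)^2\,dy$ into $\int_0^{V_t} Z_u^2\,du$. Consequently, the left-hand side of \eqref{eqn:midPage5FromEJP} agrees in distribution, jointly in $t$, with
\begin{equation*}
\delta \int_0^{V_t} u\, Z_u\,du - \int_0^{V_t} Z_u^2\,du = -\int_0^{V_t}(Z_u - \delta u)\, Z_u\,du.
\end{equation*}

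Applying Lemma \ref{lem:integral1} with $X = B$ (the Brownian motion associated with $\Psi(\lambda) = \tfrac12 \lambda^2$) then yields the process-level identity $-\int_0^{V_t}(Z_u - \delta u) Z_u\,du \overset{d}{=} -xt - \int_0^t B_s\,ds$, which is exactly \eqref{eqn:midPage5FromEJP}. For the one-dimensional marginal claim, the random variable $-xt - \int_0^t B_s\,ds$ is Gaussian with mean $-xt$ and variance
\begin{equation*}
\int_0^t \int_0^t \min(s,u)\,ds\,du = \frac{t^3}{3},
\end{equation*}
yielding the stated $\mathscr{N}(-xt, t^3/3)$ distribution.

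The main obstacle is justifying the \emph{joint} distributional identification of the local time process $(L_\infty^y(\cev H); y \ge 0)$ with the CBI $(Z_y; y \ge 0)$, since the conclusion is a process-level identity in the parameter $t$. This requires invoking Duquesne's Ray-Knight theorem as a process-level equality in the level parameter $y$, so that $V_t$ and all of the integrals depending on it are jointly compatible under the distributional equality. Once this is granted, the remainder is a routine combination of the occupation time formula with Lemma \ref{lem:integral1}.
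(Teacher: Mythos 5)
Your proposal is correct and follows essentially the same route as the paper: identify $(L_\infty^y(\cev H);y\ge 0)$ with a $\CBI_x(\Psi,\Phi)$ process via Duquesne's Ray-Knight theorem, invoke Lemma \ref{lem:integral1}, and use the occupation time formula to rewrite $\int_0^{V_t} a\,\cev L_\infty^a\,da$ as $\int_0^\infty \cev H_r\,1_{[\cev H_r\le V_t]}\,dr$. The paper performs these two steps in the opposite order, but this is immaterial; you also correctly flag the only subtle point, that the Ray-Knight theorem must be used at the process level in the level variable so the time change $V_t$ and both integrals are carried along jointly.
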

This corollary can be generalized further with Corollary \ref{cor:crtHeight}, which includes examples of processes $\cev{H}$ where the right-hand side is a spectrally positive L\'{e}vy process with certain additional constraints. The integral relationship in equation \eqref{eqn:midPage5FromEJP} is analogous to the Gorin-Skholnikov identity in the sense that both relate a linear combination of the area under a curve and the integral of the squared local time to a normal distribution. Of course, the formulation in Corollary \ref{cor:crtHeightIntro} gives a Gaussian process and not just a single normal distribution. This process-level generalization is further studied using stochastic calculus with Theorem \ref{thm:gaussianStructure}.

\subsection{Stochastic Calculus Approach}

While Corollary \ref{cor:crtHeightIntro} follows from Lemma \ref{lem:integral1} it can also be obtained by stochastic calculus without the appeal to Lamperti transform in \cite{CGU_CSBPI}. Similar methods were used by Hariya \cite{Hariya} and Lamarre and Shkolnikov \cite{GLS_RBB} to obtain the normality results in \eqref{eqn:gs_iden} and \eqref{eqn:AbetaCond}. We extend their methods in this paper. 

Suppose the L\'evy process in Lemma \ref{lem:integral1} is of the form $X_t = \sigma B_t + a t$ for $\sigma>0$ and $a\in \R$. Then an elementary calculation yields $\displaystyle xt+\int_0^t \left(\sigma B_s+ a s\right) \,ds$ is a Gaussian process with mean $\mu_t = xt+ \frac{a}{2}t^2$ and covariance function $\Gamma(t_1,t_2) = \sigma^2 \left(\frac{t_1^2 t_2}{2} - \frac{t_1^3}{6}\right)$ for $t_1\le t_2$. We can extend this to a more general Gaussian structure. To do this examine the stochastic differential equation:
\begin{equation}\label{eqn:ZsdeUC}
\begin{array}{ll}
dZ_v = g(C_v)\sqrt{Z_v}\,d W_v + \left(c+ f(C_v)Z_v  \right)\,dv,& Z_0 = x> 0\\
dC_v = Z_v\,dv& C_0 = 0 
\end{array}
\end{equation} where  $g$ is a function such that $g^2$ is Lipschitz and $0< \eps<g(t)<M<\infty$, $c$ is a constant such that $c \ge \frac{1}{2}\sup_t g(t)$ and $f$ is a continuous function and $W$ is a standard Brownian motion. In Section \ref{sec:ZUC} we prove the weak existence of a solution to \eqref{eqn:ZsdeUC} by a sequence of time-changes.

\begin{theorem}\label{thm:gaussianStructure} 
	Suppose that $(Z,C)$ is a weak solution to the stochastic differential equation in \eqref{eqn:ZsdeUC}. Let $V_t = \inf\{r\ge 0: C_r> t\}$. Define the process $X = (X_t;t\ge 0)$ by
	\begin{equation*}
	X_t = \int_0^{V_t} (Z_v - cv)Z_v\,dv.
	\end{equation*} 
	
	Then $X$ is a Gaussian process with continuous sample paths. Its mean is given by $$\displaystyle \mu_t = xt + \int_0^t (t-s)f(s)\,ds$$ and its covariance function is given by 
	$$
	\Gamma(t_1,t_2) = \int_0^{t_1} (t_2-s)(t_1-s) g^2(s)\,ds,\qquad t_1\le t_2.
	$$
\end{theorem}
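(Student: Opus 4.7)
The plan is to reduce everything to a deterministic time change that exposes the Gaussian structure. The first move is to change variables in the integral defining $X_t$. Since $dC_v = Z_v\,dv$ and $V$ is the right-inverse of $C$, substituting $u = C_v$ (so $du = Z_v\,dv$ and $Z_v = Z_{V_u} =: \widetilde Z_u$) gives
\begin{equation*}
X_t = \int_0^{V_t}(Z_v - cv)Z_v\,dv = \int_0^t \bigl(\widetilde Z_u - cV_u\bigr)\,du.
\end{equation*}
So I need to understand the time-changed process $\widetilde Z$ together with $V$.

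Next I would time-change the SDE for $Z$ by $v = V_t$, using $C_{V_t}=t$, $dV_t = \widetilde Z_t^{-1}\,dt$, and writing the drift of $Z$ along $V_t$ as
\begin{equation*}
\int_0^{V_t}\bigl(c + f(C_v)Z_v\bigr)\,dv = \int_0^t\bigl(c\widetilde Z_s^{-1}+f(s)\bigr)\,ds
= cV_t + \int_0^t f(s)\,ds.
\end{equation*}
For the martingale part $M_v:=\int_0^v g(C_u)\sqrt{Z_u}\,dW_u$, the continuous local martingale $\widetilde M_t := M_{V_t}$ has quadratic variation
\begin{equation*}
\langle\widetilde M\rangle_t = \int_0^{V_t} g^2(C_u)Z_u\,du = \int_0^t g^2(s)\,ds,
\end{equation*}
which is deterministic and finite on compacts. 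Hence $\widetilde M$ is a true continuous martingale with deterministic quadratic variation, which (by L\'evy's characterisation applied to the DDS Brownian motion run along a deterministic clock, or equivalently by a direct characteristic-function computation using the boundedness of $g$) makes $\widetilde M$ a centered Gaussian process with independent increments and $\mathbb{E}[\widetilde M_s\widetilde M_t] = \int_0^{s\wedge t} g^2(r)\,dr$. Combining the pieces yields the crucial identity
\begin{equation*}
\widetilde Z_t - cV_t = x + \widetilde M_t + \int_0^t f(s)\,ds,
\end{equation*}
so the integrand in the expression for $X_t$ is a Gaussian process plus a deterministic drift.

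With this identity in hand, $X_t = \int_0^t(\widetilde Z_u - cV_u)\,du$ is an a.s. continuous linear functional of the Gaussian process $\widetilde M$, hence Gaussian with continuous sample paths. The mean is
\begin{equation*}
\mu_t = \int_0^t\Bigl(x+\int_0^u f(s)\,ds\Bigr)du = xt + \int_0^t(t-s)f(s)\,ds
\end{equation*}
by Fubini, and for $t_1\le t_2$ the covariance is
\begin{equation*}
\Gamma(t_1,t_2) = \int_0^{t_1}\!\!\int_0^{t_2}\mathbb{E}[\widetilde M_u\widetilde M_v]\,dv\,du
=\int_0^{t_1}\!\!\int_0^{t_2}\!\!\int_0^{u\wedge v} g^2(r)\,dr\,dv\,du,
\end{equation*}
and swapping the order of integration gives $\int_0^{t_1}(t_1-r)(t_2-r)g^2(r)\,dr$, matching the claim.

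The main obstacle is the technical justification of the time change: one needs $\widetilde Z_t>0$ for all $t\ge 0$ (so that $V$ is $C^1$ with derivative $\widetilde Z^{-1}$), and $V_t<\infty$ for all $t$ (no explosion of $C^{-1}$). Positivity of $Z$ is ensured by the drift condition $c\ge \frac12\sup g^2$ (comparing with a squared-Bessel process of dimension $\ge 1$), and non-explosion of the inverse is built into the weak-existence construction of $(Z,C)$ from Section~\ref{sec:ZUC}; these must be verified uniformly so that $\widetilde M$ is a genuine (not merely local) martingale and Fubini applies in the covariance computation. The Lipschitz bound on $g^2$ and the two-sided bound $\varepsilon<g<M$ are exactly what is needed to make all of these estimates routine.
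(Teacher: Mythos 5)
Your proposal is correct and follows essentially the same route as the paper: time-change $Z$ to the process $Y_t = Z_{V_t}$ (your $\widetilde Z$), identify its SDE with drift $c/Y_t + f(t)$ and deterministic diffusion coefficient $g(t)$, integrate to get the cancellation $\int_0^t(Y_s - cV_s)\,ds = xt + \int_0^t\!\int_0^s f\,dr\,ds + \int_0^t\!\int_0^s g\,dB_r\,ds$, and read off the mean and covariance by (stochastic) Fubini. The only cosmetic difference is that the paper invokes Proposition~\ref{prop:existZUC} to justify the time-change in one step and then applies Lemma~\ref{lem:8} (stochastic Fubini giving the explicit representation $\int_0^t(t-u)g(u)\,dB_u$), whereas you rederive the time-change inline and swap integration order directly in the triple integral; the content is the same.
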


We can also study stochastic differential equations of the form
\begin{align}
\label{eqn:Zsde} dZ_v &= a \sqrt{Z_v}\,dW_v + \left(c+f(C_v)Z_v - \frac{ Z_v^2}{1-C_v} \right)\,dv,  &Z_0=x\ge 0\\
\nonumber dC_v &= Z_v\,dv, &C_0=0,
\end{align} for a standard Brownian motion $W$, constants $c>\frac{1}{2}a^2$ and $f$ a continuous function. As shown by Pitman \cite{Pitman_SDE}, the terminal local time of a reflected Brownian bridge $(L_1^v;v \ge0)$ conditioned on the event $L_1^0 =x$ is a weak solution to the stochastic differential equation
\begin{equation*}
dZ_v = 2\sqrt{Z_v} \,dW_v + \left(4 - \frac{Z_v^2}{1- \int_0^v Z_s\,ds} \right) \,dv \qquad Z_0 = x.
\end{equation*} In Section \ref{sec:ZC} we prove the weak existence of a solution  to \eqref{eqn:Zsde}.
See also Leuridans' work \cite{L_RKConditioned} on the conditioned Ray-Knight theorem. In Section \ref{sec:intRel} we extend Lemma \ref{lem:integral1} to the SDEs in \eqref{eqn:Zsde}. We prove the following theorem:

\begin{theorem}\label{thm:bridgeLT}
	Suppose that $(Z,C)$ is a weak solution to the stochastic differential equation in \eqref{eqn:Zsde}. Then
	$$
	\int_0^\infty \left\{2Z_v^2 - cv Z_v \right\}\,dv \overset{d}{=} \mathscr{N}\left(\mu,\sigma^2 \right),
	$$ where $\displaystyle\mu =x+ \int_0^1 (1-s)f(s)\,ds$ and $\displaystyle\sigma^2 = \frac{a^2}{3}$.
\end{theorem}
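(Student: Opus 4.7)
The plan is to perform a Lamperti-style time change that converts the SDE in \eqref{eqn:Zsde} into a bridge-like diffusion on $[0,1]$, and then to apply It\^o's formula to the affine combination $(1-t)\tilde Z_t$ so as to extract a Wiener integral with the desired Gaussian law. Let $V_t = \inf\{r\ge 0: C_r > t\}$ for $t\in[0,C_\infty)$ and set $\tilde Z_t := Z_{V_t}$. Using $dV_t = dt/\tilde Z_t$ together with the fact that $\int_0^\cdot a\sqrt{Z_u}\,dW_u$ is a continuous local martingale with bracket $a^2 C_\cdot$, a standard time-change computation shows that
\[
d\tilde Z_t = a\,dB_t + \left(\frac{c}{\tilde Z_t} + f(t) - \frac{\tilde Z_t}{1-t}\right)dt, \qquad \tilde Z_0 = x,
\]
for some standard Brownian motion $B$. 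Under $c>\tfrac{1}{2}a^2$ the Bessel-type drift $c/\tilde Z_t$ keeps $\tilde Z$ strictly positive on $(0,1)$, while the $-\tilde Z_t/(1-t)$ term pulls $\tilde Z_t$ down to $0$ as $t\to 1^-$, so that $C_\infty = 1$ and the time change is defined on all of $[0,1)$.

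Next I would rewrite the target quantity in terms of $\tilde Z$. The substitution $s = C_v$, $ds = Z_v\,dv$ gives $\int_0^\infty Z_v^2\,dv = \int_0^1 \tilde Z_s\,ds$, while Fubini together with $C_\infty = 1$ yields
\[
\int_0^\infty v\,Z_v\,dv = \int_0^\infty (1-C_v)\,dv = \int_0^1 \frac{1-s}{\tilde Z_s}\,ds,
\]
the second equality being obtained by the same substitution. Therefore the integral of interest equals $2\int_0^1 \tilde Z_s\,ds - c\int_0^1 (1-s)/\tilde Z_s\,ds$, an additive functional of the bridge-like diffusion $\tilde Z$.

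I would then apply It\^o's formula to the semimartingale $(1-t)\tilde Z_t$ to obtain
\[
d[(1-t)\tilde Z_t] = a(1-t)\,dB_t + \left(-2\tilde Z_t + \frac{c(1-t)}{\tilde Z_t} + (1-t)f(t)\right)dt,
\]
and integrate from $0$ to $1$, using the boundary behavior $(1-t)\tilde Z_t\to 0$ as $t\to 1^-$, to conclude
\[
\int_0^\infty(2Z_v^2 - cv\,Z_v)\,dv = x + \int_0^1(1-s)f(s)\,ds + a\int_0^1(1-t)\,dB_t.
\]
The last term is a Wiener integral, hence Gaussian with mean $0$ and variance $a^2\int_0^1(1-t)^2\,dt = a^2/3$, which is the content of the theorem.

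The main technical obstacle will be justifying the boundary analysis at $t=1$: that $C_\infty = 1$, that $(1-t)\tilde Z_t\to 0$ almost surely, and that $\int_0^1(1-s)/\tilde Z_s\,ds$ is almost surely finite. These reduce to quantitative control of the decay rate of $\tilde Z$ at the right endpoint, governed by the competition between the Bessel-type drift $c/\tilde Z_t$ and the bridge drift $-\tilde Z_t/(1-t)$. The squared Bessel bridge case $a=2$, $c=4$, $f\equiv 0$ worked out in \cite{Pitman_SDE,L_RKConditioned} provides the model asymptotics, and the well-posedness construction of Section \ref{sec:ZC} together with the hypothesis $c>\tfrac{1}{2}a^2$ should supply the necessary estimates in the general case.
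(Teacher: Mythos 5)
Your proof is correct and follows essentially the same route as the paper: the same time change $\tilde Z_t = Z_{V_t}$ converting \eqref{eqn:Zsde} to the Bessel-bridge-type SDE \eqref{eqn:YsdeC}, the same change of variables rewriting the target as $2\int_0^1 \tilde Z_s\,ds - c\int_0^1(1-s)/\tilde Z_s\,ds$, and the same reliance on the boundary facts $C_\infty=1$, $\tilde Z_t\to 0$, and $\int_0^1 \tilde Z_s^{-1}\,ds<\infty$ established in Section \ref{sec:ZC}. The only cosmetic difference is how the bridge drift is cancelled: you apply It\^o's formula to $(1-t)\tilde Z_t$, while the paper applies Fubini to the iterated integral $\int_0^1\int_0^u Y_s/(1-s)\,ds\,du$ and then observes that $a\int_0^1 B_s\,ds \sim \mathscr N(0,a^2/3)$; these are the same integration-by-parts identity viewed from two ends.
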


The above theorem has the following corollary, due to \cite{GS_paper} when $x = 0$ and \cite{GLS_RBB} when $x>0$. See also \cite{Hariya}.
\begin{corollary}\label{cor:rbb} Let $B^{|\br|} = (B^{|\br|}_t;t\in[0,1])$ denote a reflected Brownian bridge and let $L = (L_t^v; t\in[0,1],v\ge 0)$ denote its local time profile. Then, conditionally on $L_1^0 = x\ge 0$, 
	\begin{equation*}
	\int_0^1 B^{|\br|}_t\,dt - \frac{1}{2}\int_0^\infty \left(L_1^v \right)^2\,dv \overset{d}{=} \mathscr{N}\left(-\frac{x}{4},\frac{1}{12} \right).
	\end{equation*}
\end{corollary}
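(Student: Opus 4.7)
The plan is to deduce the corollary from Theorem \ref{thm:bridgeLT} by plugging in the specific SDE satisfied by the terminal local time of a reflected Brownian bridge and then translating between integrals of $B^{|\br|}$ and integrals of the local time via the occupation time formula.

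First I would invoke the result of Pitman recalled just above the statement of \eqref{eqn:Zsde}, which says that conditionally on $L_1^0 = x$, the process $Z_v := L_1^v$ is a weak solution of
\begin{equation*}
dZ_v = 2\sqrt{Z_v}\,dW_v + \left(4 - \frac{Z_v^2}{1-\int_0^v Z_s\,ds}\right)\,dv,\qquad Z_0 = x.
\end{equation*}
This fits the SDE \eqref{eqn:Zsde} with the parameters $a = 2$, $c = 4$, and $f \equiv 0$. Theorem \ref{thm:bridgeLT} therefore applies and yields
\begin{equation*}
\int_0^\infty \bigl\{2(L_1^v)^2 - 4v\,L_1^v\bigr\}\,dv \;\overset{d}{=}\; \mathscr{N}\!\left(x,\,\tfrac{4}{3}\right),
\end{equation*}
since $\mu = x + \int_0^1(1-s)\cdot 0\,ds = x$ and $\sigma^2 = a^2/3 = 4/3$.

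Second, I would use the occupation time formula for the reflected Brownian bridge with $g(v) = v$: since $B^{|\br|}_t \geq 0$,
\begin{equation*}
\int_0^1 B^{|\br|}_t\,dt = \int_0^\infty v\,L_1^v\,dv.
\end{equation*}
Substituting this into the previous display rewrites the left-hand side as $2\int_0^\infty (L_1^v)^2\,dv - 4\int_0^1 B^{|\br|}_t\,dt$. Dividing by $-4$ turns the Gaussian identity into
\begin{equation*}
\int_0^1 B^{|\br|}_t\,dt - \tfrac{1}{2}\int_0^\infty (L_1^v)^2\,dv \;\overset{d}{=}\; \mathscr{N}\!\left(-\tfrac{x}{4},\,\tfrac{1}{12}\right),
\end{equation*}
as asserted.

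I do not expect a serious obstacle: all heavy lifting is done by Theorem \ref{thm:bridgeLT} and by Pitman's SDE characterization of the conditional local time profile. The only subtlety is verifying that the hypothesis $c > \tfrac{1}{2}a^2$ required by Theorem \ref{thm:bridgeLT} is satisfied here (indeed $4 > \tfrac{1}{2}\cdot 4 = 2$), and making sure the occupation time formula is applied correctly to the nonnegative process $B^{|\br|}$ so that the spatial integral of $v\,L_1^v$ really equals the time integral of $B^{|\br|}_t$. Both of these are immediate.
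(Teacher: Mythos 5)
Your proof is correct and follows exactly the route the paper leaves implicit: the corollary is stated immediately after Theorem \ref{thm:bridgeLT} with no written proof, so the intended derivation is to substitute Pitman's SDE for $(L_1^v)_{v\ge 0}$ (i.e.\ $a=2$, $c=4$, $f\equiv 0$ in \eqref{eqn:Zsde}), invoke Theorem \ref{thm:bridgeLT} to get $\mathscr{N}(x,4/3)$, rewrite $\int_0^\infty v L_1^v\,dv$ as $\int_0^1 B^{|\br|}_t\,dt$ via the occupation time formula, and divide by $-4$. Your calculation of $\mu$, $\sigma^2$, and the rescaled law is all correct, and you have verified the hypothesis $c>\tfrac{1}{2}a^2$.
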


\subsection{Another Example}

We do not claim that this is the best discrete interpretation of the Gorin-Shkolnikov identity found in \cite{GS_paper} and its generalization in \cite{GLS_RBB}. However, the model presented in this article presents a generalization to a wide class of random forests. There are other discrete models which are asymptotically related to Brownian excursions and Brownian bridges. We briefly make note of one such example, which may be of use in order to understand $A_\beta$ for other values $\beta\in (0,\infty)\setminus\{2\}$.

It is well known that a Brownian excursion can be constructed from a Brownian bridge by placing the origin at the bridges absolute minimum, see \cite{Vervaat}. One extension of the result can be found in the work of Chassaing and Janson \cite{CJ_vervaat}. In their analysis, they use another combinatorial object of study called parking functions. Certain classes of parking functions are in bijection with labeled trees on the vertex set $\{0,1,\dotsm, n\}$ rooted at 0. See \cite{CM_parking} and references therein for more details on the specifics of the bijection. In the discrete setting, the analog of the average height of a vertex in a rooted tree is the so-called displacement for parking functions, and the average number of cousin vertices is the average number of ``cars'' that, at some point, want to park in a certain parking space. In this work we do not endeavor to prove weak limits for these discrete analogs; however, we do state what the limiting objects should be. Going back to the belief that there may be an ``interesting combinatorial interpretation" of the quantity $A_2$, parking functions may be another way to find a combinatorial interpretation of the results. 

In this section we deviate from the notation in the last section. We let $B^{\br} = (B^{\br}_t;t\in[0,1])$ denote a Brownian bridge, and let $e = (e_t;t\in[0,1])$ denote a standard Brownian excursion. We take the (deterministic) periodic extensions of these functions so that they are defined on all of $\R$. Fix an $x\ge 0$, define the following processes
\begin{align*}
X^{(1)}_t &= B^{\br}_t - xt + \sup_{t-1\le s\le t}\left(xs-B^{\br}_s \right)\\
X^{(2)}_t &= e_t - xt + \sup_{0\le s\le t} \left(xs-e_s \right).
\end{align*} The process $X^{(2)}$ is just a reflected Brownian excursion with negative drift $-x$.

We now let $X$ denote either of the processes above. The occupation measure of $X$ is equal in law to the occupation measure of a reflected Brownian bridge conditioned on its local time at zero being $x/2$. See \cite[Cor. 2.3]{CJ_vervaat}. The presence of the factor of $\frac{1}{2}$ comes from the different conventions for the local time at 0 used in \cite{Pitman_SDE} and \cite{CJ_vervaat}. Since, almost surely the occupation measure for a reflected Brownian bridge has a continuous density with respect to the Lebesgue measure, the same holds for the occupation measure of $X$. Thus we get the following corollary of Corollary \ref{cor:rbb}.

\begin{corollary}\label{cor:drift}
	Let $X$ denote either $X^{(1)}$ or $X^{(2)}$ above. Let $(L_1^v;v\ge0)$ denote a continuous version of the density of the occupation measure for $X$. Then,
	$$
	\int_0^1 X_t\,dt - \frac{1}{2}\int_0^\infty (L_1^v)^2\,dv \overset{d}{=} \mathscr{N}\left(-\frac{x}{8},\frac{1}{12}\right).
	$$
\end{corollary}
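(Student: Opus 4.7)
The plan is to reduce Corollary \ref{cor:drift} directly to Corollary \ref{cor:rbb} by observing that both sides of the claimed identity are functionals of the occupation measure of $X$, then invoking the Chassaing--Janson identification of that occupation measure in law.

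First I would note that $X$ is non-negative in both cases: taking $s = t$ in the supremum defining $X^{(i)}_t$ gives a lower bound of $xt - B^{\br}_t$ (resp.\ $xt - e_t$), so $X^{(i)}_t \ge 0$. Consequently the occupation-time formula applied to $g(y) = y$ yields
\begin{equation*}
\int_0^1 X_t\,dt = \int_0^\infty v\,L_1^v\,dv,
\end{equation*}
so the functional
\begin{equation*}
F(L_1^\cdot) := \int_0^\infty v\,L_1^v\,dv - \frac{1}{2}\int_0^\infty (L_1^v)^2\,dv
\end{equation*}
depends on $X$ only through the density $v \mapsto L_1^v$ of its occupation measure.

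Next I would invoke the result stated just before the corollary: by \cite[Cor.~2.3]{CJ_vervaat}, the occupation measure of $X$ has the same law as the occupation measure of a reflected Brownian bridge $B^{|\br|}$ conditioned on its local time at zero being $x/2$ (the factor $1/2$ accounting for the differing normalizations of local time used in \cite{CJ_vervaat} and in \cite{Pitman_SDE}, as indicated in the preceding paragraph). Since the occupation measure of a reflected Brownian bridge has a continuous density with respect to Lebesgue measure almost surely, the same holds for $X$, and the density of the occupation measure of $X$ is equal in distribution to $(L_1^v)_{v\ge 0}$ of $B^{|\br|}$ conditioned on $L_1^0 = x/2$.

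Finally, applying Corollary \ref{cor:rbb} with parameter $x/2$ in place of $x$ gives
\begin{equation*}
F(L_1^\cdot) \overset{d}{=} \mathscr{N}\left(-\tfrac{1}{4}\cdot\tfrac{x}{2},\ \tfrac{1}{12}\right) = \mathscr{N}\left(-\tfrac{x}{8},\ \tfrac{1}{12}\right),
\end{equation*}
which is the claim. There is no real obstacle here beyond careful bookkeeping of the local-time normalization; the whole proof is essentially a transfer of Corollary \ref{cor:rbb} through the Chassaing--Janson occupation-measure identity.
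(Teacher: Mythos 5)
Your proof is correct and follows the same route the paper intends: non-negativity of $X$, the occupation-time formula reducing both integrals to functionals of the occupation density, the Chassaing--Janson occupation-measure identification with the $x/2$ normalization correction, and then a direct application of Corollary \ref{cor:rbb}. You spell out the non-negativity and functional-of-the-density steps that the paper leaves implicit, but the argument is essentially identical.
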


Parking functions and random forests are not the only combinatorial models that have asymptotics related to the Brownian excursion or Brownian and their local times. As mentioned previously, numerous quantities in graph enumerations are related to either a Brownian excursion or its integrals, see \cite{Janson_ExcursionArea} and references therein. See also \cite{AP_BBAss} for the connection between these processes and the asymptotics of uniform random mappings.

\subsection{Overview of The Paper}

In Section \ref{sec:forest}, we discuss the discrete forest model underlying our weak convergence results in Theorem \ref{thm:weakConvergence}. This model is based on a random forest model used by Aldous \cite{Aldous_AF} and Duquesne \cite{Duquesne_CRTI}. Afterward, in Section \ref{sec:processesOnForests} we discuss in more detail the various processes defined on the random forest. In Sections \ref{sec:lp} and \ref{sec:CBI} we discuss L\'{e}vy processes and continuous state branching processes respectively. Importantly, we describe the path-by-path relationship between these two classes from \cite{CGU_CSBPI}. Section \ref{sec:heightIntro} contains information on $\Psi$-height processes and their Ray-Knight theorems.

After discussing these preliminaries we move to prove integral relationships for L\'{e}vy processes and their relationships with continuous state branching processes with immigration in Section \ref{sec:BPFacts}. This brief discussion allows us to prove Corollary \ref{cor:crtHeight}, which is the continuum random tree generalization of the Gorin-Shkolnikov identity.

In Section \ref{sec:weakConvergence} we prove our weak convergence results which rely on the integral results in Section \ref{sec:BPFacts}. The proofs of Theorems \ref{thm:excurForest} and \ref{thm:weakConvergence} are found in Section \ref{sec:someProofs}. In Section \ref{sec:intRel} we prove the normality results in Theorems \ref{thm:gaussianStructure} and \ref{thm:bridgeLT} after studying the properties of solutions to \eqref{eqn:ZsdeUC} or \eqref{eqn:Zsde} in Section \ref{sec:SDEstuff}.

\subsection*{Acknowledgments} The author thanks David Aldous for a helpful discussion on the random tree interpretation of Jeulin's identity. The author thanks Soumik Pal for continued guidance during this project and providing helpful comments on clarifying several arguments in the paper. I would also like to thank the two anonymous referees for their careful reading and detailed comments on the presentation and proofs, including an idea for simplifying the proof of Lemma \ref{lem:8}. 

\section{Preliminaries}

\subsection{Forest Constructions}\label{sec:forest}

In this section we describe the forest ``picture'' underlying some of our weak convergence arguments later. This model is based on a model used by Aldous in \cite{Aldous_AF} and then by Duquesne in \cite{Duquesne_CRTI}. However, since we will not be proving convergence of the contour functions, we do use a labeling which is convenient for our analysis and is not useful for any type of genealogical analysis which can be found in \cite{Duquesne_CRTI}.

In this work, a forest will mean a locally finite planar graph on the vertex set 
$$
\{w_j, m_j: j = 0,1,\dotsm\}
$$ which has a finite number of connected components which themselves are rooted planar trees. The vertices of the form $w_j$ will be called the \textit{non-mutant vertices} (depicted in Figure \ref{fig:intro} as black vertices) and the vertices of the form $m_j$ will be called \textit{mutant vertices} (depicted as the white unlabeled vertices in Figure \ref{fig:intro}). When referring to a forest $\f$, we will let $\mathscr{V}(\f)$ denote the vertex set of $\f$ and $\mathscr{E}(\f)$ denote the edge set.

We will construct our random forests in a breadth-first approach from two given sequences of non-negative integers $(\chi_j; j = 0,1,\dotsm)$ and $(\eta_j; j=1,2,\dotsm)$ along with a number $k = 0,1,2,\dotsm$. At each height there will be two types of vertices, mutant and non-mutant vertices. There will only be one mutant vertex in each generation, and will be omitted from most of the objects we count on our forest. Children of mutant vertices will appear last in the breadth-first ordering when restricting to each height.

Each $\chi_j$ will represent the number of offspring the $j^\text{th}$ non-mutant vertex will have. Each $\eta_j$ will represent the number of immigrants that will appear in generation $j$. The immigrants will be non-mutant offspring of the mutant vertex from the previous generation.

We start with defining $\f_0$ to be the graph with vertex set $$\mathscr{V}(\f_0) = \{w_0,\dots,w_{k-1},m_0\}$$ and edge set $\mathscr{E}(\f_0) = \emptyset$. The vertex $m_0$ will be the mutant vertex. These vertices will be the roots of the random forest, and will consequently be the vertices of height $0$. 
For each $h\ge 1$ we construct $\f_h$ from $\f_{h-1}$ by adding vertices of height $h$. There are two cases which we must consider: (1) there are no non-mutant vertices at height $h-1$ in $\f_{h-1}$ or (2) the non-mutant vertices at height $h-1$ in $\f_{h-1}$ are $\{w_j:j = a,a+1,\dotsm, b-1\}$ for some natural numbers $a<b$.

In Case (1), we add $\eta_h$ non-mutant vertices and a single mutant vertex as offspring of the mutant vertex $m_{h-1}$. The collection of non-mutant vertices of $\f_{h-1}$ is of the form 
$$
\{w_j: j = 0,1,\dotsm, b-1\}
$$ for some $b \ge 0$. When $b = 0$ the above set is empty. We let $\f_h$ be the graph with vertex set $\mathscr{V}(\f_h):=\mathscr{V}(\f_{h-1})\cup \{w_j: j= b,b+1,\dotsm, b+ \eta_h-1\}\cup \{m_h\}$ and the edge set $$\mathscr{E}(\f_{h}):=\mathscr{E}(\f_{h-1}) \cup \{(m_{h-1}, w_j): j= b,b+1,\dotsm, b+\eta_h - 1\} \cup \{(m_{h-1},m_h)\}.$$
The height of the vertices of $\f_h$ will be the distance to the root, which inductively implies for all $v\in \f_h\setminus \f_{h-1}$ that $\h(v) = h$. We remark that if $\eta_h = 0$ then the only vertex in $\f_h\setminus\f_{h-1}$ is the vertex $m_h$. 

In Case (2), we add $\chi_j$ non-mutant vertices as offspring for each non-mutant vertex $w_j$ in generation $h-1$. We also add $\eta_h$ non-mutant vertices and a single mutant vertex as the offspring of $m_{h-1}$. We go into more detail in order to describe the breadth-first labeling. There are non-mutant vertices at height $h-1$ in $\f_{h-1}$. These vertices will be of the form $\{w_j:j = a,\dotsm, b-1\}$ for some $a<b$. Each vertex $w_j$ gives birth to $\chi_j$ children at height $h$ and the vertex $m_{h-1}$ will give birth to $\eta_h$ non-mutant children and 1 mutant child. Thus the vertex set for $\f_h$ will be
\begin{equation*}
\mathscr{V}(\f_h) = \mathscr{V}(\f_{h-1})\cup \left\{ w_j: j = b, b+1,\dotsm, b+ \sum_{i=a}^{b-1} \chi_i + \eta_h - 1 \right\}\cup\{m_h \}.
\end{equation*} Moreover, for each $\ell = 0,1,\dots, b-a-1$ we add edges between a vertex $w_{a+\ell}$ and $w_j$ for $j = b+\sum_{i=a}^{a+\ell-1} \chi_i , b+\sum_{i=a}^{a+\ell-1}\chi_i+1,\dotsm, b+ \sum_{i=a}^{a+\ell-1} \chi_i + \chi_{a+\ell} - 1$. We also add edges between $m_{h-1}$ and $m_h$ along with $w_j$ for $j = b+\sum_{i=a}^{b-1} \chi_i  , b+\sum_{i=a}^{b-1}\chi_i +1,\dotsm , \sum_{i=a}^{b-1} \chi_i -1 + \eta_h.$ Again, the vertices in $\f_{h}\setminus \f_{h-1}$ have distance $h$ from a root. 

We continue this process ad infinitum, and define $\f:=\bigcup_{h\ge 0} \f_h$. By construction, for $i<j$ we must have $\h(w_i)\le \h(w_j)$. 

\begin{definition}\label{def:GWI}
	A forest $\f$ constructed as above starting from $k$ non-mutant vertices is called \textbf{Galton-Watson immigration forest} (GWI forest for short) when the integer sequence $(\chi_j; j = 0,1,\dotsm)$ are i.i.d. and the sequence $(\eta_h; h = 1,2,\dotsm)$ are i.i.d. and are independent of $(\chi_j)$. We call the common distribution, say $\mu$, of $(\chi_j)$ the offspring distribution, and we call the common distribution, say $\nu$, of $(\eta_h)$ the immigration rate. We denote the law of $\f$ by $\GWI_k(\mu,\nu)$.  
\end{definition}

\subsection{Processes defined on the forest} \label{sec:processesOnForests}

In this section we define several processes on the forest, and describe some of the relationships between the various processes and the breadth-first labeling of the vertices. Let $\f$ be a forest constructed as above. We recall that the forest $\f$ is rooted, and that the height of a vertex is defined as the distance from the vertex to a root in the graph distance. 

Define the \textit{height profile} of the forest $\f$ as the process $Z^\f = \left(Z^\f_h: h \in \N_0 \right)$ by 
\begin{equation*}
Z_h^\f = \#\{\text{non-mutant vertices $v\in \f$ with $\h(v) = h$}\}.
\end{equation*} This counts the number of non-mutant vertices at height $h$ in the forest $\f$. We also define the \textit{cumulative height profile} of $\f$ as the process $C^\f = (C^\f_h; h\in \N_0)$ by
\begin{equation*}
C_h^\f = \sum_{j = 0}^h Z^\f_j,
\end{equation*} and its right-continuous inverse $V^\f = (V^\f_r; r = 0,1,\dotsm)$ by
\begin{equation*}
V_r^\f = \inf\{h\ge 0: C^\f_h >r\}.
\end{equation*}

We observe that $C_h^\f$ is the index of the first vertex at height $h+1$. This follows easily from the convention that we start indexing at 0. Hence, $V^\f_r = \h(w_r)$ and so $\displaystyle Z^\f_{V^\f_r}$ is the width of the layer of the forest containing the vertex $w_r$. 

Lastly, we refer the reader back to \eqref{eqn:KJdef} for a definition of the cumulative breadth-first cousin process $K^\f$ and the cumulative breadth-first height process $J^\f$.

\subsection{L\'evy processes}\label{sec:lp} We will provide a brief overview of spectrally positive L\'evy processes. More details and proofs of the statements below can be found in Chapter VII of Bertoin's monograph \cite{Bertoin_LP}.

A (possibly killed) spectrally positive L\'evy process $X = (X_t;t\ge 0)$ is a L\'evy process which contains no negative jumps. Its Laplace transform exists and uniquely characterizes the process $X$. The Laplace transform must be of the form
\begin{equation}\label{eqn:lpOfsplp}
\E\left[ \exp(-\lambda X_t) \right] = \exp(t\Psi(\lambda)) \qquad \forall \lambda>0.
\end{equation} Moreover, the function $\Psi$ must be of the form
\begin{equation}\label{eqn:psi}
\Psi(\lambda) = -\kappa + \alpha\lambda +\beta\lambda^2 + \int_{(0,\infty)} \left( e^{-\lambda r} -1 + \lambda r 1_{[r<1]} \right)\,\pi(dr)
\end{equation} where $\kappa, \beta\ge 0$, $\alpha\in \R$ and $\pi$ is a Radon measure on $(0,\infty)$ such that $\displaystyle \int_{(0,\infty)} (1\wedge r^2)\,\pi(dr)<\infty$. Conversely, for each such $\Psi$, there exists a spectrally positive L\'evy process with such a Laplace transform.

A particular class of spectrally positive L\'evy processes are subordinators. These are L\'evy processes with increasing sample paths. A subordinator $Y$ has a Laplace transform of the form
\begin{equation}\label{eqn:lpOfsub}
\E\left[ \exp(-\lambda Y_t)\right] = \exp(-t\Phi(\lambda)),\qquad \forall \lambda>0,
\end{equation} where $\Phi$ is of the form
\begin{equation}\label{eqn:phi}
\Phi(\lambda) = \kappa' + \alpha'\lambda - \int_{(0,\infty)}(e^{-\lambda r} -1)\,\nu(dr)
\end{equation} with $\kappa',\alpha'\ge 0$ and $\nu$ is a Radon measure with $\displaystyle \int_{(0,\infty)} (1\wedge r)\,\nu(dr)<\infty$. In our work, we concern ourselves with the case where $\Phi(\lambda) = \delta \lambda$ for some $\delta>0$, which makes the subordinator $Y_t = \delta t$. 

\subsection{Continuous state branching processes} \label{sec:CBI}

Continuous state branching processes with immigration (CBI processes) arise as scaling limits of discrete Galton-Watson processes, see \cite{KW_BPI}. A CBI process $Z = (Z_t; t\ge 0)$ is a Feller process on $[0,\infty]$ which is absorbed at $\infty$. We denote by $\E_x[-]$ the expectation conditionally given $Z_0 = x\ge 0$. As shown by \cite{KW_BPI}, the Laplace transform of $Z$ is of the form
\begin{equation*}
\E_x \left[\exp\{-\lambda Z_t\}  \right] = \exp \left[-xu(t,\lambda)-\int_0^t \Phi(u(s,\lambda))\,ds \right],\qquad  \forall \lambda>0
\end{equation*} where $u$ is the unique non-negative solution to the integral equation
\begin{equation*}
u(t,\lambda) + \int_0^t \Psi(u(s,\lambda))\,ds = \lambda,
\end{equation*} for functions $\Psi$ and $\Phi$. The function $\Psi$ is called the branching mechanism and must be of the form \eqref{eqn:psi} and the function $\Phi$ is called the immigration rate and must be of the form \eqref{eqn:phi}. Conversely, given any two such functions $\Psi$ and $\Phi$, there exists a CBI process with branching mechanism $\Psi$ and immigration rate $\Phi$. For simplicity, we will use $\CBI_x(\Psi,\Phi)$ to refer to the law of a CBI process starting from $x\ge 0$ and satisfies the above Laplace transform.

By \cite{KW_BPI}, we know that continuous state branching processes with immigration are in one-to-one correspondence with pairs of L\'evy processes $(X,Y)$ satisfying \eqref{eqn:lpOfsplp} and \eqref{eqn:lpOfsub}. The bijection described there is in terms of  the Laplace transforms of the respective processes. A path-wise identification does exist, thanks to the work of Caballero, P\'erez Garmendia and Uribe Bravo \cite{CGU_CSBPI}. As mentioned previously, the authors of \cite{CGU_CSBPI} show that if $X$ is a spectrally positive L\'evy process with Laplace exponent $(-\Psi)$ and $Y$ is an independent subordinator with Laplace exponent $\Phi$ then a c\`adl\`ag solution to \eqref{eqn:lamp} exists, is unique, and is a $\CBI_x(\Psi,\Phi)$ process. When $Y$ is identically 0, the L\'evy process $X$ in \eqref{eqn:lamp} is stopped upon hitting $-x$. The path-wise relationship when $Y = 0$ was observed by Lamperti \cite{Lamperti_CSBP1}, although proved later by Silverstein \cite{S_LT}. 

\subsection{The $\Psi$-height process} \label{sec:heightIntro} In this section we recall properties of a $\Psi$ height process $H = (H_t;t\ge 0)$. These processes were introduced by Le Gall and Le Jan in \cite{LL_BPLP}, and further examined in \cite{LL_BPLP2,DL_Levy}. The excursions of this process are the continuum random tree analog of the contour process for a discrete tree. We recall some of their properties, but do not endeavor to state things in full generality. To this end, we assume that 
\begin{equation}\label{eqn:psi2}
\Psi(\lambda) = \alpha \lambda+ \beta \lambda^2 + \int_{(0,\infty)} (e^{-\lambda r}-1 + \lambda r)\, \pi(dr)
\end{equation} with $\alpha, \beta\ge 0$ and $\pi$ a Radon measure with the stronger integrability condition $\int_0^\infty (r\wedge r^2)\,\pi(dr)<\infty$. We make the assumption that $\Psi$ is conservative, i.e. it satisfies equation \eqref{eqn:cons}, and that further $\Psi$ satisfies
\begin{equation}\label{eqn:ctsH}
\int_1^\infty \frac{1}{\Psi(u)}\,du<\infty.
\end{equation} Both of these assumptions on $\Psi$ are slight restrictions on a general theory, but they imply that a L\'evy process $X$ with Laplace exponent $(-\Psi)$ has paths of infinite variation, non-negative jumps, and does not drift towards $+\infty$.

Let $X$ be a L\'evy process with Laplace exponent $(-\Psi)$ satisfying the above restrictions. The value of the height process at time $t$ is a way to ``measure'' the size of the set
\begin{equation*}
\left\{s\le t: X_{s-} = \inf_{s\le r\le t} X_r \right\}.
\end{equation*} This is done through a time-reversal approach. For each $t>0$, define $\displaystyle \widehat{X}^{(t)} = \left(\widehat{X}^{(t)}_s; s\in[0,t] \right)$ by $\widehat{X}^{(t)}_s = X_t - X_{(t-s)-}$ and the corresponding supremum process by $\widehat{S}^{(t)}_s = \sup_{r\in[0,s]} \widehat{X}^{(t)}_r$. The value $H_t$ is a normalization of the local time at 0 of the process $\widehat{X}^{(t)}-\widehat{S}^{(t)}$. By \cite[Theorem 1.4.3]{DL_Levy} $H$ has a continuous modification if and only if the condition in \eqref{eqn:ctsH} is satisfied. Whenever \eqref{eqn:ctsH} is satisfied, we will assume that $H$ is this modification. For more information on height processes see \cite{DL_Levy}.

The process $H$ possesses a family of local times $L = (L^a_t;t\ge 0, a\ge0)$ which almost surely satisfies the occupation density formula:
\begin{equation}\label{eqn:od}
\int_0^t g(H_r)\,dr = \int_0^\infty g(a) L_t^a \,da, \qquad \forall g\in C_c(\R),\,t\ge 0.
\end{equation} See \cite[Proposition 1.3.3]{DL_Levy}. There is also a Ray-Knight theorem for these processes as shown in \cite[Theorem 1.4.1]{DL_Levy}. Namely, define $T_x = \inf\{t: L_t^0 = x\} = \inf\{t:X_t = -x\}$ then 
\begin{equation*}
\left(L_{T_x}^a; a \ge 0 \right) \sim \CBI_x(\Psi,0).
\end{equation*}
Similar Ray-Knight theorems were obtained by Warren \cite{Warren_RK} involving sticky Brownian motion.

The Ray-Knight theorem above was extended by Lambert \cite{Lambert_CBI} and Duquesne \cite{Duquesne_CRTI} to allow for some immigration. Lambert's work is slightly more general; however Duquesne's work contains some better approximation results for the local time. For $\delta>0$ and $x\ge 0$ define the left-height process $\cev{H} = (\cev{H}_t;t\ge0)$ by 
\begin{equation}\label{eqn:leftHeight}
\cev{H}_t = H_t + \frac{1}{\delta}(L_t^0 - x)_+.
\end{equation} By \cite{Duquesne_CRTI}, there exists a jointly measurable family of local times $\cev{L} =$\linebreak$ \left(\cev{L}^a_t ;t\ge 0, a\ge 0 \right)$ which is continuous and increasing in $t$ and satisfies equation \eqref{eqn:od} with $\cev{H}$ and $\cev{L}$ replacing $H$ and $L$. Since $\cev{H}_t\to \infty$ as $t\to\infty$, the limit $\cev{L}^a_\infty := \lim_{t\upto\infty} \cev{L}^a_t$ is finite almost surely due to \eqref{eqn:cons}. Moreover, the Ray-Knight theorem, see \cite[Theorem 1.2, Remark 3.2]{Duquesne_CRTI} and \cite[Theorem 5]{Lambert_CBI}, states
\begin{equation}\label{eqn:cevLCBI}
\left( \cev{L}^a_\infty ; a\ge 0\right) \sim \CBI_x(\Psi,\Phi),\qquad \Phi(\lambda) = \delta \lambda.
\end{equation} In the case where the branching process is a squared Bessel process and the height process is a reflected Brownian, a similar result was shown in \cite{LY_RKThms}.

\section{Integral Relationships for CBIs} \label{sec:BPFacts}

In this section we describe various properties of continuous state branching processes and their implications for random trees. We will mostly work under the assumption that $\Psi$ is a conservative branching mechanism; however, when discussing the implications for left-height processes we must work under slightly stricter assumptions. We recall that $\Psi$ being conservative is equivalent, see \cite{Grey_explosion}, to the almost sure finiteness of a $\CBI_x(\Psi,0)$ process. 

Let us start by gathering certain properties of processes related to a \linebreak$\CBI_x(\Psi,\Phi)$ process $Z$. If $\Phi(\lambda) = \delta\lambda$ for some $\delta>0$ then, by Theorem 2 in \cite{CGU_CSBPI}, we can and do assume $Z$ to be the unique c\`adl\`ag solution to
\begin{equation} \label{eqn:lamp2}
Z_t = x + X_{C_t} + \delta t,\qquad C_t = \int_0^t Z_s\,ds,
\end{equation} where $X$ is a L\'evy process with Laplace exponent $(-\Psi)$. Moreover, using this representation of $Z$ along with Lemma 3 and Corollary 5 in \cite{CGU_CSBPI} and the strong Markov property for $Z$ the following lemma is easy to see.
\begin{lemma}\label{lem:cLem}
	Let $Z$ be a $\CBI_x(\Psi,\Phi)$ process where $\Psi$ is a conservative branching mechanism (i.e. satisfies \eqref{eqn:cons} and \eqref{eqn:psi}) and $\Phi(\lambda) = \delta \lambda$ for some $\delta > 0$. Then $C_t:=\int_0^t Z_s\,ds$ is strictly increasing and almost surely $C_t\to \infty$ as $t\to\infty$. 
\end{lemma}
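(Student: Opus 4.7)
The plan is to leverage the pathwise Lamperti representation $Z_t = x + X_{C_t} + \delta t$ recalled just above the lemma (Theorem~2 of \cite{CGU_CSBPI}), where $X$ is a spectrally positive L\'evy process with Laplace exponent $-\Psi$. Because $\Psi$ is conservative, Grey's criterion combined with Lemma~3 and Corollary~5 of \cite{CGU_CSBPI} ensures that $Z$ is well defined, finite and c\`adl\`ag on $[0,\infty)$; I would invoke those references once at the start to legitimize all subsequent pathwise manipulations, and then derive both conclusions from the identity above using only that $\delta>0$ and that $X$ has c\`adl\`ag paths.

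For the strict monotonicity I will argue by contradiction. Suppose there exist $a<b$ with $C_a=C_b$. Since $C$ is continuous and nondecreasing, $C\equiv C_a$ on $[a,b]$, so $Z_s=0$ for Lebesgue-almost every $s\in[a,b]$. But the Lamperti identity, evaluated on $[a,b]$, yields
\[
Z_s \;=\; x + X_{C_a} + \delta s, \qquad s\in[a,b],
\]
which is a strictly increasing affine function of $s$ with slope $\delta>0$, hence vanishes at at most one point. This contradicts $Z_s=0$ a.e.\ on $[a,b]$.

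For $C_t\to\infty$ I will again argue by contradiction, setting $L:=\lim_{t\to\infty}C_t$ and assuming $L<\infty$. Since $X$ has c\`adl\`ag paths it is bounded on the compact interval $[0,L]$, so $\sup_{t\ge 0}|X_{C_t}|<\infty$. Substituting into the Lamperti identity forces $Z_t=x+X_{C_t}+\delta t\to +\infty$ as $t\to\infty$, which in turn gives $C_t=\int_0^t Z_s\,ds\to\infty$, contradicting $C_\infty=L<\infty$. (If one prefers, the strong Markov property of $Z$ may be used here to restart after any time at which $Z$ visits $0$ and reduce the question to an excursion-by-excursion statement, but the bounded c\`adl\`ag argument already suffices.)

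I expect essentially no technical obstacle once the pathwise representation is in hand: the two assertions are direct consequences of $\delta>0$ together with càdl\`ag-boundedness of $X$ on compacts. The only delicate point is citing \cite{CGU_CSBPI} correctly to secure the representation and the a.s.\ finiteness of $Z$ on $[0,\infty)$; with those facts available the remainder is a short two-line contradiction for each half of the statement.
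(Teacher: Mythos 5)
Your proof is correct and follows the same route the paper indicates: it leans on the pathwise Lamperti representation $Z_t = x + X_{C_t} + \delta t$ and then extracts both conclusions from the fact that $\delta>0$ and $X$ has c\`adl\`ag paths. The paper's own ``proof'' is a one-line pointer to Lemma~3 and Corollary~5 of \cite{CGU_CSBPI} together with the strong Markov property; your version is self-contained and more elementary — the strict monotonicity comes from noticing that on any interval where $C$ is flat, $Z_s = x + X_{C_a} + \delta s$ is a strictly increasing affine function and so cannot vanish a.e., and divergence of $C$ comes from boundedness of $X$ on $[0,L]$ forcing $Z_t\to\infty$ and hence $C_t\to\infty$, a contradiction. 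You correctly observe the strong Markov property is optional. No gaps.
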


The above lemma tells us that the process $V$ defined in equation \eqref{eqn:vdef} is actually the two-sided inverse of $C$. Moreover, since $C$ diverges towards infinity almost surely, the value of $V_r$ is finite for each $r\ge0$. 

We now move to the proof of Lemma \ref{lem:integral1}. 
\begin{proof}[Proof of Lemma \ref{lem:integral1}]
	We assume that we are working on a probability space where $Z$ has the path-by-path representation in \eqref{eqn:lamp2}, for a L\'{e}vy process $X$. The change of variable formula implies
	\begin{equation*}
	\int_0^{V_r} F(u,Z_u)\,dC_u = \int_0^r F(V_u,Z_{V_u})\,du
	\end{equation*} for locally bounded functions $F$. Moreover, since $dC_u = Z_u\,du$, we can claim
	\begin{equation*}
	\int_0^{V_r} (Z_u-\delta u)Z_u \,du = \int_0^r (Z_{V_u} - \delta V_u)\,du.
	\end{equation*} However, by \eqref{eqn:lamp2} and the fact that $V$ is the two-sided inverse of $C$, 
	\begin{equation*}
	Z_{V_u} = x+ X_{C_{V_u}} + \delta V_u  = x+X_u + \delta V_u.
	\end{equation*}
	The result desired claim now easily follows. 
\qed  \end{proof}

In fact the above proof easily implies the following corollary as well.
\begin{corollary}
	Under the conditions for Lemma \ref{lem:integral1} and for any $n = 1,2,\dotsm$
	\begin{equation*}
	\int_0^{V_r} (Z_u-\delta u)^n Z_u\,du \overset{d}{=} \int_0^r \left(x+ X_u\right)^n\,du,
	\end{equation*} where $X$ is a L\'evy process with Laplace exponent $(-\Psi)$.
\end{corollary}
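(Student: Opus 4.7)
The plan is to follow the same line of argument as in Lemma \ref{lem:integral1}, simply carrying an extra power through the computation. The result is in fact a pathwise identity, not merely an equality in distribution.

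First, I would set up the same probability space used in the proof of Lemma \ref{lem:integral1}: by Theorem 2 of \cite{CGU_CSBPI} we may assume that $Z$ has the Lamperti representation
\begin{equation*}
Z_t = x + X_{C_t} + \delta t, \qquad C_t = \int_0^t Z_s\,ds,
\end{equation*}
for a L\'evy process $X$ with Laplace exponent $(-\Psi)$. Lemma \ref{lem:cLem} guarantees $C$ is strictly increasing with $C_t \to \infty$, so $V_r$ is the genuine two-sided inverse of $C$ and is finite for every $r \ge 0$.

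Second, I would apply the change of variable formula
\begin{equation*}
\int_0^{V_r} F(u, Z_u)\,dC_u = \int_0^r F(V_u, Z_{V_u})\,du
\end{equation*}
with the choice $F(u,z) = (z - \delta u)^n$. Since $dC_u = Z_u\,du$, this rewrites the left-hand side of the claimed identity as
\begin{equation*}
\int_0^{V_r} (Z_u - \delta u)^n Z_u\,du = \int_0^r (Z_{V_u} - \delta V_u)^n\,du.
\end{equation*}
Third, I would substitute the Lamperti identity. Because $V$ inverts $C$, we have $Z_{V_u} = x + X_{C_{V_u}} + \delta V_u = x + X_u + \delta V_u$, and therefore $Z_{V_u} - \delta V_u = x + X_u$. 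Plugging this into the display above gives
\begin{equation*}
\int_0^{V_r} (Z_u - \delta u)^n Z_u\,du = \int_0^r (x + X_u)^n\,du,
\end{equation*}
which is the desired statement (in fact pathwise on this coupling, hence in distribution unconditionally).

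There is essentially no obstacle here beyond verifying the change of variable step: since $C$ is absolutely continuous with density $Z_u$ and $F(u,z) = (z - \delta u)^n$ is continuous, and since $Z$ is c\`adl\`ag and finite on $[0, V_r]$, the integrals involved are all finite and the substitution is standard. In short, this corollary is a cosmetic generalization of Lemma \ref{lem:integral1} obtained by taking the $n$th power in the same change of variables.
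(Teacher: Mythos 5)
Your proof is correct and is exactly the argument the paper has in mind: the paper proves Lemma \ref{lem:integral1} via the Lamperti representation and the change of variables $dC_u = Z_u\,du$, then states the corollary with the remark that the same proof applies verbatim with the $n$-th power carried through. Nothing further is needed.
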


\subsection{The continuum random tree interpretation}\label{sec:crtSec}

Here we must strengthen the assumptions put onto $\Psi$ in order to guarantee that there is a continuous $\Psi$-height process. As discussed in Section \ref{sec:heightIntro}, we require that $\Psi$ satisfies \eqref{eqn:cons}, \eqref{eqn:psi2} and \eqref{eqn:ctsH}. 

Let $\cev{H} = (\cev{H}_t;t\ge 0)$ be defined as in \eqref{eqn:leftHeight} and let $\cev{L} = (\cev{L}^a_t; a\ge 0, t\ge 0)$ denote its local time. We know from \eqref{eqn:cevLCBI} that the process $\left( \cev{L}_\infty^a;a \ge 0\right)$ is a $\CBI_x(\Psi,\Phi)$ process where $\Phi(\lambda) = \delta \lambda$. Define $\displaystyle V_r = \inf\left\{ x \ge 0: \int_0^x \cev{L}_\infty^y \,dy >r\right\}$ and then Lemma \ref{lem:integral1} implies
\begin{equation*}
\left(\delta\int_0^{V_r} a \cev{L}^a_\infty \,da- \int_0^{V_r}\left( \cev{L}^a_\infty \right)^2\,da; r\ge 0\right) \overset{d}{=} \left(-xr - \int_0^r X_u\,du;r\ge0\right).
\end{equation*}
However, the occupation time formula implies that almost surely the left-most integral above can be recognized as
\begin{equation*}
\int_0^{V_r} a \cev{L}^a_\infty \,da = \int_0^\infty \cev{H}_t 1_{[\cev{H_t}\le V_r]}\,dt.
\end{equation*} Hence, we can conclude the following:
\begin{corollary}\label{cor:crtHeight}
	Let $\cev{H}$ be defined as in \eqref{eqn:leftHeight} with $\Psi$ satisfying \eqref{eqn:cons}, \eqref{eqn:ctsH} and \eqref{eqn:psi2} and let $\cev{L}$ denote its local time. Then
	\begin{equation*}
	\left(\delta\int_0^\infty \cev{H}_t 1_{[\cev{H}_t\le V_r]}- \int_0^{V_r} \left(\cev{L}_\infty^a\right)^2\,da ; r\ge 0 \right) \overset{d}{=} \left(-xr - \int_0^r X_u\,du;r \ge 0 \right)
	\end{equation*} where $X$ is a L\'evy process with Laplace exponent $(-\Psi)$.
\end{corollary}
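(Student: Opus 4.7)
The proof will be a direct assembly of three ingredients already developed in the excerpt: the Ray–Knight theorem \eqref{eqn:cevLCBI} for $\cev H$, the integral identity in Lemma \ref{lem:integral1}, and the occupation time formula \eqref{eqn:od}. All the heavy lifting has been done upstream, so the task is to line up the objects correctly and expand.

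First I would set $Z_a := \cev L^a_\infty$ for $a \ge 0$. Under the standing hypotheses on $\Psi$ (namely \eqref{eqn:cons}, \eqref{eqn:ctsH}, \eqref{eqn:psi2}), the Ray–Knight theorem \eqref{eqn:cevLCBI} tells us that $Z$ is a $\CBI_x(\Psi,\Phi)$ process with $\Phi(\lambda) = \delta\lambda$, and these are exactly the hypotheses of Lemma \ref{lem:integral1}. With this identification, the process $V$ in the statement of the corollary, namely $V_r = \inf\{s \ge 0 : \int_0^s \cev L^y_\infty\,dy > r\}$, coincides with the process defined in \eqref{eqn:vdef} applied to $Z$. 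Thus Lemma \ref{lem:integral1} yields the process-level identity
\begin{equation*}
\left(\int_0^{V_r} (Z_u - \delta u)Z_u\,du ;\, r \ge 0\right) \overset{d}{=} \left(xr + \int_0^r X_u\,du ;\, r \ge 0\right),
\end{equation*}
where $X$ is a L\'evy process with Laplace exponent $(-\Psi)$.

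Next I would expand the left-hand integrand as $(Z_u - \delta u)Z_u = Z_u^2 - \delta u Z_u$ and rewrite the drift term in terms of $\cev H$. Using $Z_a = \cev L^a_\infty$ and the occupation time formula \eqref{eqn:od} for $\cev H$ applied to the bounded Borel function $g(a) = a\,\mathbf{1}_{[a \le V_r]}$ (and then passing $t \uparrow \infty$ via monotone convergence, which is legitimate since $\cev L^a_t \uparrow \cev L^a_\infty$ and $\cev L^a_\infty$ is locally integrable in $a$), one obtains
\begin{equation*}
\int_0^{V_r} a\, Z_a\,da = \int_0^{V_r} a\, \cev L^a_\infty\,da = \int_0^\infty \cev H_t\, \mathbf{1}_{[\cev H_t \le V_r]}\,dt.
\end{equation*}
Substituting this into the integral identity gives
\begin{equation*}
\int_0^{V_r} \bigl(\cev L^a_\infty\bigr)^2 da \;-\; \delta \int_0^\infty \cev H_t\, \mathbf{1}_{[\cev H_t \le V_r]}\,dt \;\overset{d}{=}\; xr + \int_0^r X_u\,du,
\end{equation*}
jointly in $r$, which is exactly the claimed identity after multiplying both sides by $-1$.

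There is no real obstacle here, but the one subtlety worth being explicit about is justifying the occupation-time rewrite at $t = \infty$: one needs that $\cev L^a_t$ is finite and continuous in $a$, that it increases to $\cev L^a_\infty < \infty$ almost surely for each $a$ (as noted below \eqref{eqn:leftHeight} using conservativity \eqref{eqn:cons}), and that the occupation formula \eqref{eqn:od} extends from $g \in C_c(\R_+)$ to the indicator of $[0,V_r]$ (which follows by a standard monotone class argument, since $V_r$ is a random level but both sides are monotone in the truncation level). Once this bookkeeping is in place the corollary drops out directly from Lemma \ref{lem:integral1}.
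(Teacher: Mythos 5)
Your proof is correct and follows essentially the same route as the paper's: invoke the Ray--Knight identity \eqref{eqn:cevLCBI} to identify $(\cev L^a_\infty)_{a\ge 0}$ as a $\CBI_x(\Psi,\Phi)$ process, apply Lemma \ref{lem:integral1}, and then convert $\int_0^{V_r} a\,\cev L^a_\infty\,da$ into $\int_0^\infty \cev H_t \mathbf 1_{[\cev H_t \le V_r]}\,dt$ via the occupation-time formula \eqref{eqn:od}. The only difference is that you spell out the monotone-convergence and monotone-class justifications for applying the occupation formula to an indicator at $t=\infty$, which the paper leaves implicit; that is a harmless (and arguably welcome) bit of extra care, not a divergence in method.
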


\section{Weak Convergence} \label{sec:weakConvergence}

Throughout this section we will let $(\f^{(n)};n\ge 1)$ denote a sequence of forests where each $\f^{(n)}$ is a $\GWI_{[nx]}(\mu_n,\nu_n)$-forest. We also assume that $\mu_n$ and $\nu_n$ satisfy Assumption \ref{ass:1}, and $(\gamma_n;n\ge 1)$ is the sequence of integers specified therein.


With this we can now prove the following joint convergence lemma:
\begin{lemma}\label{lem:ZCV}
	If $\mu_n$ and $\nu_n$ satisfy Assumption \ref{ass:1}, then the following joint convergence holds in the product of the Skorokhod space $\D(\R_+,\R)^3$
	\begin{align}\label{eqn:ZCV}
	&\left( \left(\frac{1}{n} Z^{*,n}_{[\gamma_n s]} \right)_{s\ge 0}, \left( \frac{1}{n\gamma_n} C_{[\gamma_n t]}^{*,n}\right)_{t\ge 0}, \left(\frac{1}{\gamma_n}V^{*,n}_{[n\gamma_n r]} \right)_{r\ge 0}\right)\nonumber\\
	&\qquad \overset{(d)}{\Longrightarrow} \left( \left(Z_s \right)_{s\ge 0}, \left(\int_0^t Z_s\,ds \right)_{t\ge 0}, \left(\inf\left\{t: \int_0^t Z_s\,ds >r \right\} \right)_{r\ge 0} \right),
	\end{align} where $Z$ is a $\CBI_{x}(\Psi,\Phi)$ process.
\end{lemma}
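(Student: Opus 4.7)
The plan is to exploit the discrete Lamperti transform identification discussed earlier in the paper and to transfer this identification to the limit via the continuity of the (continuous) Lamperti transform established by Caballero, P\'{e}rez Garmendia and Uribe Bravo \cite{CGU_CSBPI}. Set $\xi_m^{*,n} = \sum_{j=0}^{m-1}(\chi_j^{*,n}-1)$ and $\zeta_h^{*,n} = \sum_{j=1}^{h}\eta_j^{*,n}$. The breadth-first construction of $\f^{(n)}$ described in Section \ref{sec:forest} gives the path-by-path relation
$$Z_{h+1}^{*,n} = [nx] + \xi^{*,n}_{C_h^{*,n}} + \zeta^{*,n}_{h+1},\qquad C_h^{*,n} = \sum_{j=0}^h Z_j^{*,n},$$
which is the discrete Lamperti equation for $Z^{*,n}$ driven by the input pair $(\xi^{*,n},\zeta^{*,n})$.

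The first step is to obtain joint functional convergence of the driving pair. By the independence of $(\chi_j^{*,n})$ and $(\eta_j^{*,n})$, together with Assumption \ref{ass:1} and standard functional limit theorems for triangular arrays of i.i.d.\ increments with no negative jumps (Skorohod-style invariance principles toward spectrally positive L\'{e}vy processes), one obtains
$$\left(\frac{1}{n}\xi^{*,n}_{[n\gamma_n t]},\ \frac{1}{n}\zeta^{*,n}_{[\gamma_n t]}\right)_{t\ge 0} \overset{(d)}{\Longrightarrow} (X_t,\ \delta t)_{t\ge 0}$$
jointly in $\D(\R_+,\R)^2$, where $X$ is the L\'{e}vy process from Assumption \ref{ass:1}. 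Note that the limit in the second coordinate is the deterministic continuous path $t\mapsto \delta t$, which makes the joint convergence automatic from marginal convergence.

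The second step is to push this convergence through the Lamperti transform. The deterministic Skorohod-continuity of the Lamperti transform between such driving pairs and their $\CBI_x(\Psi,\Phi)$ outputs is precisely the content of the main results of \cite{CGU_CSBPI}. Applying their continuity statement to the joint convergence above yields
$$\left(\frac{1}{n}Z^{*,n}_{[\gamma_n s]}\right)_{s\ge 0} \overset{(d)}{\Longrightarrow} (Z_s)_{s\ge 0},$$
where $Z$ is the c\`{a}dl\`{a}g solution of \eqref{eqn:lamp2}, jointly with the convergence of the input pair. Joint convergence of the cumulative height profile then follows because the map $z \mapsto \int_0^{\cdot} z_s\,ds$ is continuous from $\D(\R_+,\R)$ into $C(\R_+,\R)$ equipped with the topology of uniform convergence on compact sets, and Riemann-sum approximation identifies the limit as $C_t = \int_0^t Z_s\,ds$; so $\frac{1}{n\gamma_n} C^{*,n}_{[\gamma_n \cdot]} \Rightarrow C$ jointly with the first coordinate.

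Finally, $V_r = \inf\{t: C_t > r\}$ is the right-continuous inverse of $C$. By Lemma \ref{lem:cLem}, the limit $C$ is almost surely strictly increasing and diverges to $\infty$, so the inversion map is continuous at the sample paths of $C$, and a standard continuous mapping argument upgrades the joint convergence to include $\frac{1}{\gamma_n}V^{*,n}_{[n\gamma_n \cdot]} \Rightarrow V$. The main obstacle, and the only place where real work is needed, is the second step: one must verify that the random approximations of the driving pair land almost surely in the set where the Lamperti transform is Skorohod-continuous (the delicate point is matching the jump times of the approximating Lamperti solutions with those of the limit when $X$ has jumps), which is exactly what is established path-by-path in \cite{CGU_CSBPI}.
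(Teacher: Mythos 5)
Your proposal is correct, and for the second and third coordinates it is essentially the paper's argument: continuity of $z\mapsto\int_0^\cdot z_s\,ds$ into $C(\R_+,\R)$ (the paper proves this via a $\D\hookrightarrow L^1_{\loc}$ embedding argument), followed by continuity of the first-passage inversion on strictly increasing, unbounded paths (the paper invokes \cite[Theorem~7.2]{Whitt_WC}) combined with Lemma~\ref{lem:cLem}.

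For the first coordinate, you take a genuinely different route. The paper disposes of it in one line by citing \cite[Theorem~1.4]{Duquesne_CRTI}, a black-box weak-convergence theorem for the generation-size chain of GWI forests to a $\CBI_x(\Psi,\Phi)$ process. You instead return to the discrete Lamperti equation for $Z^{*,n}$, establish joint functional convergence of the driving walks $(\xi^{*,n},\zeta^{*,n})$ to $(X,\delta\,\cdot)$ (upgrading the one-time marginal convergence in Assumption~\ref{ass:1} via the standard triangular-array invariance principle, and a monotonicity argument for the subordinator part), and then transfer this through the Lamperti transform using \cite{CGU_CSBPI}. This is a valid alternative: it is more self-contained, and it makes the Lamperti mechanism visible — the same mechanism that underlies Lemma~\ref{lem:integral1} — whereas the paper's citation is cleaner but opaque. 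The one thing I would tighten is the phrase ``deterministic Skorohod-continuity of the Lamperti transform'': that map is \emph{not} continuous on all of $\D(\R_+,\R)^2$ (jump times can fail to align), and what \cite{CGU_CSBPI} actually provides is a weak-convergence theorem for solutions of Lamperti equations under hypotheses that hold almost surely for the limit, not a plain continuity statement. You flag exactly this caveat at the end, which is correct; I would cite the specific convergence theorem of \cite{CGU_CSBPI} there rather than describing it as continuity.
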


\begin{proof}
	We remark that by \cite[Theorem 1.4]{Duquesne_CRTI}, the convergence of rescaling of $Z^{*,n}$ converges to a $\CBI_x(\Psi,\Phi)$ process, i.e.
	\begin{equation*}
	\left(\frac{1}{n}Z_{[\gamma_n s]}^{*,n};s\ge 0\right)\overset{(d)}{\Longrightarrow} \left( Z_s;s \ge 0\right).
	\end{equation*}
	
	We also observe 
	\begin{align*}
	\frac{1}{n\gamma_n} C_{[\gamma_nt]}^{*,n} &= \frac{1}{n\gamma_n} \sum_{h=0}^{[\gamma_nt]} Z^{*,n}_{h}\\
	&= \frac{1}{\gamma_n} \int_0^{[\gamma_nt]+1} \frac{1}{n} Z^{*,n}_{[u]}\,du\\
	&=\int_0^{([\gamma_n t]+1)/\gamma_n} \frac{1}{n} Z^{*,n}_{[\gamma_n s]}\,ds.
	\end{align*} Since $([\gamma_n t]+1)/\gamma_n \to t$ locally uniformly, the joint convergence
	\begin{align*}
	&\left( \left(\frac{1}{n} Z^{*,n}_{[\gamma_n s]} \right)_{s\ge 0}, \left( \frac{1}{n\gamma_n} C_{[\gamma_n t]}^{*,n}\right)_{t\ge 0}\right)\nonumber\\
	&\qquad \overset{(d)}{\Longrightarrow} \left( \left(Z_s \right)_{s\ge 0}, \left(\int_0^t Z_s\,ds \right)_{t\ge 0}\right),
	\end{align*} will follow once we argue the continuity of the map $f\mapsto (f,\int_0^\cdot f(s)\,ds)$. 
	
	To see that $f\mapsto \int_0^\cdot f(s)\,ds$ is continuous we prove that $\D$ continuously embeds into $L^1_{\loc}$. Indeed, suppose that $f_n\to f$ in $\D$ and let $t$ be a continuity point of $f$. For $g\in \D(\R_+,\R)$, set $\|g \|_{[0,t]} = \sup_{r\le t} |g(r)|$. Then there exists a sequence $\tau_n:[0,t]\to[0,t]$ such that $\|\tau_n-id\|_{[0,t]}\vee \|f\circ \tau_n - f_n\|_{[0,t]}\to 0$ as $n\to\infty$. Then
	\begin{align*}
	\int_0^t |f_n(s)-f(s)|\,ds&\le \int_0^t |f_n(s)-f(\tau_n(s))|\,ds + \int_0^t |f(\tau_n(s))-f(s)|\,ds\\
	&\le t \|f_n-f\circ\tau_n\|_{[0,t]} + \int_0^t |f(\tau_n(s))-f(s)|\,ds.
	\end{align*} The first term can easily be handled since $f_n\to f$ in $\D$ and the second term converges to 0 as $n\to\infty$ by dominated convergence. This proves the continuity of the embedding $\D\hookrightarrow L^1_{\loc}$. Since $f\mapsto \int_0^\cdot f(s)\,ds$ is a continuous map from $L^1_{\loc}(\R_+)\to C(\R_+)$ and $C(\R_+)$ embeds continuously into $\D$, we have shown the desired continuity.
	
	The convergence of the third coordinate in equation \eqref{eqn:ZCV} follows from \cite[Theorem 7.2]{Whitt_WC} and Lemma \ref{lem:cLem}. Indeed, define the set $E \subset \D(\R_+,\R)$ to be the collection of functions $f$ which are unbounded from above with $f(0)\ge 0$ and equip this set with the Skorokhod $J_1$ topology. Then the map $E\to E$ defined by $(f(t);t\ge0)\mapsto (\inf\{t\ge 0:f(t)>r\};r \ge 0)$ is measurable and it is continuous on the set of strictly increasing functions. 
\qed \end{proof}

\subsection{Proofs of Theorem \ref{thm:weakConvergence} and Theorem \ref{thm:excurForest}}\label{sec:someProofs}

\begin{proof}[Proof of Theorem \ref{thm:weakConvergence}] Throughout this proof we refer to $Z$ as  a $\CBI_x(\Psi,\Phi)$ process, the quantity $C_t = \int_0^t Z_s\,ds$ and $V$ defined in \eqref{eqn:vdef}. In what follows we sometimes write the index in the process as $Z^{*,n}(h)$ instead of as a subscript $Z^{*,n}_h$, and similar remarks hold for the other processes.

	We recall that the first index of a vertex in $\f^{(n)}$ at height $h+1$ is $C_h^{*,n}$.
	Therefore
	\begin{align*}
	K^{*,n}(C^{*,n}_h) &= \sum_{j=0}^{C^{*,n}_h - 1} \csn(w_j) = \sum_{\ell = 0}^{h} Z^{*,n}_\ell (Z^{*,n}_\ell - 1)\\
	&= \sum_{\ell=0}^h \left(Z^{*,n}_\ell\right)^2 - C^{*,n}_h.
	\end{align*} Similarly, we can see that $$J^{*,n}(C^{*,n}_h) = \sum_{\ell=0}^h \ell Z_{\ell}^{*,n}.$$
	
	Consequently, 
	\begin{align*}
	\frac{1}{n^2\gamma_n}K^{*,n}(C^{*,n}_{[\gamma_n t]}) &= \frac{1}{n^2 \gamma_n} \sum_{\ell=0}^{[\gamma_n t]} \left(Z_{\ell}^{*,n}\right)^2 - \frac{1}{n^2\gamma_n} C^{*,n}_{[\gamma_n t]}\\
	&= \int_0^{([\gamma_nt]+1)/\gamma_n} \left(\frac{1}{n} Z^{*,n}_{[\gamma_n s]}\right)^2\,ds - \frac{1}{n^2\gamma_n} C^{*,n}_{[\gamma_nt]}
	\end{align*} and
	\begin{equation*}
	\frac{1}{n\gamma_n^2} J^{*,n}(C^{*,n}_{[\gamma_nt]}) = \int_0^{([\gamma_n t]+1)/\gamma_n} \frac{[\gamma_n s]}{\gamma_n} \cdot \frac{1}{n} Z^{*,n}_{[\gamma_ns]}\,ds.
	\end{equation*}
	This easily implies the weak convergence in $\D(\R_+,\R)^2$
	\begin{align*}
	&\left(\left(\frac{1}{n^2\gamma_n}K^{*,n}(C^{*,n}_{[\gamma_nt]}) ;t\ge 0\right), \left(\frac{1}{n\gamma_n^2}J^{*,n}(C^{*,n}_{[\gamma_nt]});t\ge 0 \right)\right)
	\\&\qquad \overset{(d)}{\Longrightarrow}\left( \left( \int_0^t Z_s^2\,ds;t\ge0\right), \left( \int_0^t sZ_s\,ds;t\ge0\right) \right).
	\end{align*} Moreover this convergence is joint with the convergence in \eqref{eqn:ZCV}, and hence by a lemma on pg. 151 in \cite{B_CPM}
	\begin{equation*}\begin{split}
	&\left(\frac{1}{n^2\gamma_n} K^{*,n}(C^{*,n}(V^{*,n}_{[n\gamma_n r]})) - \frac{\delta}{n\gamma_n^2} J^{*,n}(C^{*,n}(V^{*,n}_{[n\gamma_nr]})) ;r\ge 0\right) \\
	&\qquad\qquad\qquad\overset{(d)}{\Longrightarrow} \left( \int_0^{V_r} (Z_s-\delta s)Z_s\,ds;r\ge 0\right).
	\end{split}
	\end{equation*} By Lemma \ref{lem:integral1} and Slutsky's theorem, the result follows if we can show
	\begin{equation} \label{eqn:KdiffToZero}
	\left(\left|\frac{1}{n^2\gamma_n}K^{*,n}(C^{*,n}(V^{*,n}_{[n\gamma_n r]})) - \frac{1}{n^2\gamma_n}K^{*,n}_{[n\gamma_n r]}  \right|;r\ge 0\right) \overset{(d)}{\Longrightarrow} \mathbf{0}:=(0; r\ge 0)
	\end{equation} and
	\begin{equation*}
	\left(\left|\frac{1}{n\gamma^2_n}J^{*,n}(C^{*,n}(V^{*,n}_{[n\gamma_n r]})) - \frac{1}{n\gamma_n^2}J^{*,n}_{[n\gamma_n r]}  \right|;r\ge 0\right) \overset{(d)}{\Longrightarrow} \mathbf{0}.
	\end{equation*}
	
	To argue the above convergences, we observe that $C^{*,n}(V^{*,n}_r-1)\le r < C^{*,n}(V^{*,n}_r)$. Since $K^{*,n}$ is increasing, we have
	\begin{align}
	\bigg|\frac{1}{n^2\gamma_n}& K^{*,n}(C^{*,n}(V^{*,n}_{[n\gamma_n r]})) - \frac{1}{n^2\gamma_n} K^{*,n}_{[n\gamma_nr]} \bigg| \nonumber\\
	&\le \left|\frac{1}{n^2\gamma_n} K^{*,n}(C^{*,n}(V^{*,n}_{[n\gamma_n r]})) - \frac{1}{n^2\gamma_n} K^{*,n}(C^{*,n}(V^{*,n}_{[n\gamma_n r]}-1))  \right| \nonumber \\
	&\le \frac{1}{n^2\gamma_n} \left(Z^{*,n}(V^{*,n}_{[n\gamma_nr]})\right)^2 \label{eqn:KboundbyZ2}.
	\end{align} The presence of the square in the second inequality follows from the fact that $$K^{*,n}(C^{*,n}(h)) - K^{*,n}(C^{*,n}(h-1)) = Z_h^{*,n}\cdot (Z_h^{*,n}-1) \le \left( Z_{h}^{*,n}\right)^2.$$ The desired convergence in \eqref{eqn:KdiffToZero} follows from 
	\begin{equation*}
	\left(\frac{1}{n^2\gamma_n} \left(Z^{*,n}(V^{*,n}_{[n\gamma_n r]}) \right)^2;r\ge 0\right)\overset{(d)}{\Longrightarrow} \mathbf{0}.
	\end{equation*} The above convergence then holds by Lemma \ref{lem:ZCV} and standard weak convergence arguments for time-changes (see e.g. a lemma on pg. 151 in \cite{B_CPM}). Indeed, we have
	\begin{equation} \label{eqn:ZcompVconv}
	\left(	n^{-1} Z^{*,n} \left( V^{*,n}_{[n\gamma_n r]} \right); r\ge 0 \right) \overset{(d)}{\Longrightarrow} \left(Z\left( \inf\{t: \int_0^t Z_s\,ds >r \} \right); r\ge0\right).
	\end{equation} Since $\gamma_n\to \infty$ by Assumption \ref{ass:1}, the stated convergence to zero holds. 
	
	Reasoning as in \eqref{eqn:KboundbyZ2} we get the upper bound 
	\begin{equation*}
	\left|\frac{1}{n\gamma_n^2} J^{*,n}(C^{*,n}(V^{*,n}_{[n\gamma_n r]})) - \frac{1}{n\gamma_n^2} J^{*,n}_{[n\gamma_nr]} \right| \le \frac{1}{n\gamma_n^2}V^{*,n}_{[n\gamma_nr]} \cdot Z^{*,n}(V^{*,n}_{[n\gamma_nr]}).
	\end{equation*} By Lemma \ref{lem:ZCV} and the convergence in \eqref{eqn:ZcompVconv}, the following weak convergence holds
	\begin{equation*}
	\left(\frac{1}{n\gamma_n^2}V^{*,n}_{[n\gamma_nr]} \cdot Z^{*,n}(V^{*,n}_{[n\gamma_nr]}) ;r\ge0\right) \overset{(d)}{\Longrightarrow} \mathbf{0}.
	\end{equation*}  The result now follows.
\qed \end{proof}

Similar arguments can yield Theorem \ref{thm:excurForest}. However, the authors of \cite{GS_paper} and \cite{GLS_RBB} are interested in distributional properties of the quantity 
\begin{equation*}
\int_0^1 B^{|\br|,x}_t - \frac{1}{\beta}\int_0^\infty \left(L_{1,x}^v \right)^2\,dv, \quad \forall \beta>0
\end{equation*} where $B^{|\br|,x}$ is a reflected Brownian bridge conditioned on its local time at level zero and time 1 being exactly $x$ and $L_{1,x}^v$ is the local time of $B^{|\br|,x}$ at time 1 and level $v$. Hence, we prove the following proposition, which by taking $\beta = 2$ and using Theorem 1.13 in \cite{GLS_RBB} yields the formulation in Theorem \ref{thm:excurForest}.

\begin{proposition}
	Suppose that $\f^{(n)}$ is a uniformly chosen rooted labeled forest on $n$ vertices with $k_n$ roots where $2k_n/\sqrt{n}\to x\ge 0$ as $n\to\infty$. Then the following convergence in distribution holds
	\begin{equation*}
	\frac{1}{2 n^{3/2}} \sum_{v\in \f^{(n)}} \h(v) - \frac{2}{\beta n^{3/2}} \sum_{v\in \f^{(n)}} \csn(v) \overset{(d)}{\Longrightarrow} \int_0^1 B^{|\br|,x}_t\,dt - \frac{1}{\beta} \int_0^\infty (L_{1,x}^v)^2\,dv,	
	\end{equation*} where $B^{|\br|,x}$ and $L_{1,x}^v$ are as above.
\end{proposition}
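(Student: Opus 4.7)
The plan is to realize $\f^{(n)}$ as a critical Galton--Watson forest with offspring law $\mu=\operatorname{Poisson}(1)$ and $k_n$ roots, conditioned on the total non-mutant population being exactly $n$. Since under $\mu=\operatorname{Poisson}(1)$ this conditioned GW measure coincides with the uniform law on rooted labeled forests on $[n]$ with $k_n$ roots (a classical bijection), both statistics are determined by the generation profile $Z_h^{\ast,n}:=\#\{v\in\f^{(n)}:\h(v)=h\}$:
\begin{equation*}
\sum_{v} \h(v) = \sum_{h\ge 0} h Z_h^{\ast,n},\qquad \sum_v \csn(v) = \sum_{h\ge 0} (Z_h^{\ast,n})^2 - n.
\end{equation*}

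The key analytic input is a functional scaling limit for the rescaled profile. In the regime $2k_n/\sqrt n\to x\ge 0$, Pitman's SDE description \cite{Pitman_SDE} of the conditional terminal local time of a reflected Brownian bridge combined with the conditioned GW-forest analogue of Duquesne's Ray--Knight theorem gives
\begin{equation*}
\Bigl(\tfrac{1}{\sqrt n}Z_{[\sqrt n s]}^{\ast,n};\,s\ge 0\Bigr)\overset{(d)}{\Longrightarrow}(\ell_s;\,s\ge 0)\quad\text{in }\D(\R_+,\R),
\end{equation*}
where $\ell_s=\tfrac12 L_{1,x}^{s/2}$. Heuristically this is the bridge version of Theorem \ref{thm:weakConvergence} with $\mu=\operatorname{Poisson}(1)$ and no immigration, obtained by conditioning the limiting Feller diffusion on having total integrated mass equal to $1$; this conditioning step is the one piece of the argument not directly covered by the unconditioned Theorem \ref{thm:weakConvergence}.

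Given the profile convergence, I would rewrite both rescaled statistics as Riemann integrals and pass to the limit:
\begin{equation*}
\tfrac{1}{n^{3/2}}\sum_h h Z_h^{\ast,n} = \int_0^\infty \tfrac{[\sqrt n s]}{\sqrt n}\cdot \tfrac{Z_{[\sqrt n s]}^{\ast,n}}{\sqrt n}\,ds \overset{(d)}{\Longrightarrow} \int_0^\infty s\,\ell_s\,ds,
\end{equation*}
and similarly $\tfrac{1}{n^{3/2}}\sum_h (Z_h^{\ast,n})^2 \Rightarrow \int_0^\infty \ell_s^2\,ds$; the residual $n/n^{3/2}\to 0$ from the identity $\sum_v\csn(v)=\sum_h(Z^{\ast,n}_h)^2-n$ is negligible. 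A change of variables $u=s/2$ together with the occupation time formula for $B^{|\br|,x}$ yields $\int s\ell_s\,ds = 2\int_0^1 B^{|\br|,x}_t\,dt$ and $\int\ell_s^2\,ds = \tfrac12\int_0^\infty (L_{1,x}^v)^2\,dv$, so the prefactors $\tfrac12$ and $\tfrac{2}{\beta}$ in the proposition are tuned precisely to absorb these factors of $2$ and produce the stated limit.

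The main obstacle will be the passage to the limit in the second-moment functional $f\mapsto\int f^2$, which is not continuous on $\D(\R_+,\R)$. I would handle this by truncating the integral to a compact $s$-interval $[0,T]$: standard estimates on conditioned GW forests show the height is $O_p(\sqrt n)$ with exponentially decaying tails and that the rescaled profile is tight in $L^2$, which controls the tail of the integral uniformly in $n$. On $[0,T]$ the limit $\ell$ is continuous (the solution of Pitman's SDE), so $f\mapsto\int_0^T f^2$ is continuous at $\ell$ in the $\D$-topology, and joint convergence with the linear functional together with Slutsky's lemma completes the argument.
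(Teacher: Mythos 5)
Your proposal is correct and follows essentially the same route as the paper: realize the uniform forest as a conditioned Poisson(1) Galton--Watson forest, invoke Pitman's profile-convergence result \cite{Pitman_SDE} to get the local-time limit, rewrite the height and cousin sums as Riemann integrals of the rescaled profile, and identify the limits via the occupation-time formula (your normalization $\tfrac{1}{\sqrt n}Z^{\ast,n}_{[\sqrt n s]}\Rightarrow\tfrac12 L_{1,x}^{s/2}$ is equivalent to the paper's $\tfrac{2}{\sqrt n}Z^{\ast,n}_{[2\sqrt n v]}\Rightarrow L_{1,x}^v$ after $s=2v$, and the prefactor bookkeeping matches). Your truncation argument for the discontinuous functional $f\mapsto\int f^2$ on $\D(\R_+,\R)$, together with tail control on the forest height, addresses a step the paper leaves implicit and is a sensible supplement.
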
 

\begin{proof}
	This proof follows from arguments similar to Theorem \ref{thm:weakConvergence} and the results of Pitman \cite{Pitman_SDE} describing a conditional version of a result by Drmota and Gittenberger \cite{DG_Bridge}. Alternatively, when $x = 0$ we can use Jeulin's identity along with a prior work by Drmota and Gittenberger \cite{DG_profile}.
	
	Let $Z^{*,n} = (Z^{*,n}_h; h=0,1,\dotsm)$ denote the height profile of $\f^{(n)}$. Then, by Theorems 4 and 7 in \cite{Pitman_SDE}, we have
	\begin{equation*}
	\left(\frac{2}{\sqrt{n}} Z^{*,n}_{[2\sqrt{n}v]};v\ge0 \right)\overset{(d)}{\Longrightarrow} \left(L_{1,x}^v;v\ge 0\right),
	\end{equation*} where the convergence above is weak convergence in the Skorokhod space.
	
	Hence,
	\begin{align*}
	\frac{1}{n^{3/2}} \sum_{v\in \f^{(n)}} \h(v) &= \frac{1}{n^{3/2}}\sum_{h=0}^\infty hZ^{*,n}_{h}=\frac{1}{n^{3/2}}\int_0^\infty [u] Z^{*,n}_{[u]}\,du\\
	&= \int_0^\infty \frac{[2\sqrt{n}v]}{\sqrt{n}} \cdot \frac{2}{\sqrt{n}} Z^{*,n}_{[2\sqrt{n}v]} \,dv \overset{(d)}{\Longrightarrow} 2\int_0^\infty v L^{v}_{1,x}\,dv
	\end{align*} and, similarly
	\begin{align*}
	\frac{1}{n^{3/2}} \sum_{v\in \f^{(n)}} \csn(v) &= \frac{1}{n^{3/2}}\sum_{h=0}^\infty (Z^{*,n}_{h})(Z^{*,n}_h-1)\\
	&= \frac{1}{n^{3/2}} \sum_{h=0}^\infty\left\{(Z_h^{\ast,n})^2 - Z_h^{\ast,n}\right\}\\
	&= \frac{1}{2}\int_0^\infty \left(\frac{2}{\sqrt{n}}Z^{*,n}_{[2\sqrt{n}v]}\right)^2\,dv - \frac{1}{\sqrt{n}}\\
	&\overset{(d)}{\Longrightarrow}\frac{1}{2} \int_0^\infty (L_{1,x}^v)^2\,dv. 
	\end{align*} The result now easily follows.
\qed \end{proof}

\section{SDE Results} \label{sec:SDEstuff}

In this section we discuss the existence and uniqueness of solutions to SDEs of the form \eqref{eqn:Zsde} and \eqref{eqn:ZsdeUC}. We first study the situation with equation \eqref{eqn:Zsde} and then move onto studying equation \eqref{eqn:ZsdeUC}.

\subsection{Analysis of \eqref{eqn:Zsde}} \label{sec:ZC}

We begin by studying a fairly different stochastic differential equation. Let $\tilde{B}$ be a Brownian motion on the filtered probability space $(\Om,\F_t, \tilde{P})$ and let $Y_t$ be the unique strong solution to the stochastic differential equation
\begin{equation*}
dY_t = a d\tilde{B}_t + \left(\frac{c}{Y_t} - \frac{Y_t}{1-t}\right)\,dt,\qquad Y_0 = x\ge 0,\quad t\in[0,1]
\end{equation*} where $a>0$ and $c\ge \frac{a^2}{2}$ are constants. The process $Y$ is $a$ times a Bessel bridge of dimension $\delta = \frac{2c}{a^2}+1\ge 2$ and so a unique strong solution exists by properties of Bessel bridges. See Section XI.3 in \cite{RY} and \cite{GY_Bessel} for more properties about Bessel bridges.

Now let $f:\R_+\to \R$ be a continuous function and define $M$ to be the continuous martingale $M_t :=\frac{1}{a} \int_0^t f(s)\,d\tilde{B}_s$. Observe that $\mathcal{E}(M_t)$, where $\mathcal{E}$ is the Dol\'eans-Dade exponential, is a positive continuous local martingale. Hence, by Girsanov's theorem \cite[Theorem VIII.1.7]{RY}, the measures $P:= \mathcal{E}(M_t)\cdot \tilde{P}$ and $\tilde{P}$ are equivalent on $\F_t$ for each $t\ge 0$. Moreover, $B_t:= \tilde{B}_t - \frac{1}{a}\int_0^t f(s)\,ds$ is a $P$-Brownian motion and so
\begin{equation}\label{eqn:YsdeC}
dY_t = a dB_t + \left( \frac{c}{Y_t} + f(t) - \frac{Y_t}{1-t}\right)\,dt,\qquad Y_0 = x\ge 0,\quad t\in[0,1].
\end{equation}

Using \cite[Theorem IX.1.11]{RY}, we have the following lemma:
\begin{lemma}\label{lem:YCexist}
	There is weak existence and uniqueness in law to equation \eqref{eqn:YsdeC} for constants $a>0$, $c \ge \frac{a^2}{2}$ and continuous function $f$. 
	
	Moreover, the law of a solution $Y$ to \eqref{eqn:YsdeC} is equivalent to the law of $aR = (aR_t;t\in[0,1])$ where $R$ is a Bessel bridge of dimension $\frac{2c}{a^2}+1$. 
\end{lemma}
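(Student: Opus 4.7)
The plan is to promote the Girsanov calculation sketched just above the lemma into a complete proof of weak existence, and then run the same calculation in reverse to get uniqueness in law.

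For weak existence, I would first verify that $\mathcal{E}(M)$ is a true $\tilde P$-martingale on the closed interval $[0,1]$. Since $f$ is continuous on $[0,1]$ it is bounded, hence the bracket $\langle M\rangle_1 = a^{-2}\int_0^1 f(s)^2\,ds$ is a deterministic finite number and Novikov's criterion applies. Setting $P = \mathcal{E}(M_1)\,\tilde P$, Girsanov's theorem gives that $B_t = \tilde B_t - a^{-1}\int_0^t f(s)\,ds$ is a $P$-Brownian motion on $[0,1]$. Substituting $d\tilde B_t = dB_t + a^{-1}f(t)\,dt$ into the $f=0$ SDE satisfied by $Y = aR$ yields \eqref{eqn:YsdeC}, so $Y$ is a weak solution on $(\Om,\F_t,P)$.

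For uniqueness in law, suppose $(Y',B')$ is any weak solution to \eqref{eqn:YsdeC} on some $(\Om',\G_t,P')$. Define $N_t = -a^{-1}\int_0^t f(s)\,dB'_s$; again the boundedness of $f$ gives Novikov, so $\mathcal{E}(N)$ is a uniformly integrable $P'$-martingale and $\tilde P' := \mathcal{E}(N_1)\,P'$ is equivalent to $P'$ on $\G_1$. Under $\tilde P'$ the process $\tilde B'_t := B'_t + a^{-1}\int_0^t f(s)\,ds$ is a Brownian motion and $Y'$ satisfies the $f=0$ SDE with driving motion $\tilde B'$. Because $2c/a^2 \ge 1$, this is $a$ times the Bessel-bridge SDE of dimension $\delta = 2c/a^2 + 1 \ge 2$, which admits pathwise uniqueness; thus the law of $Y'$ under $\tilde P'$ is that of $aR$. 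The density $\mathcal{E}(N_1)$ is a measurable functional of the path of $B'$, and $B'$ is itself recovered pathwise from $Y'$ via $a\,dB'_t = dY'_t - (c/Y'_t + f(t) - Y'_t/(1-t))\,dt$. Hence the joint law of $(Y',B')$ under $P'$ is uniquely determined by the Bessel-bridge law under $\tilde P'$, which establishes uniqueness in law. The claimed equivalence of the law of $Y$ with that of $aR$ then follows immediately from $P \sim \tilde P$ on $\F_1$.

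The main obstacle is the singular drift $-Y_t/(1-t)$ as $t \uparrow 1$, which places \eqref{eqn:YsdeC} outside the standard globally-Lipschitz framework. This is precisely why the proof is routed through [Theorem IX.1.11]{RY}: the $f=0$ problem is a Bessel bridge, whose existence, uniqueness and terminal pinning at $t=1$ are already furnished there, while the Girsanov perturbation $f(t)/a$ is bounded and so cannot interact with the singularity. A secondary point to check is that under $P$ the process $Y$ still satisfies the pinning $Y_1 = 0$ inherent in the Bessel bridge, but this is inherited from $aR$ under $\tilde P$ by the equivalence of the two measures on $\F_1$.
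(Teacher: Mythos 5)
Your proposal is correct and takes essentially the same route as the paper: the preceding paragraph of the paper sets up exactly the Girsanov transformation you describe (with $\mathcal{E}(M)$ and the $P$-Brownian motion $B$), and the citation to Theorem IX.1.11 of Revuz–Yor is precisely the general statement that weak existence and uniqueness in law are preserved under a bounded, progressively measurable perturbation of the drift by $\sigma c$ — here $\sigma = a$ and $c = f/a$. What you have written out (Novikov from the boundedness of $f$ on $[0,1]$, the forward direction for existence, and the reverse Girsanov change of measure together with pathwise uniqueness of the $\delta \ge 2$ Bessel bridge SDE for uniqueness in law) is the proof of that theorem specialized to this SDE, and your final observation that $P \sim \tilde{P}$ on $\F_1$ gives the stated equivalence of the law of $Y$ with that of $aR$.
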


Using facts about Bessel bridges and the equivalence of measures described in the above lemma, we obtain the following corollary. See, for example, \cite[Chapter XI]{RY}, \cite{GY_Bessel}. In particular, the derivation of Equation (2.6) in \cite{Hariya} generalizes for $\delta$-dimensional Bessel processes when $\delta>2$. 
\begin{corollary}\label{cor:YCpp}
	Suppose $Y$ is a solution to \eqref{eqn:YsdeC} with $c\ge a^2/2$ and $f$ a continuous function. Then the following hold almost surely.
	\begin{enumerate}
		\item $\displaystyle \int_0^1 \frac{1}{Y_t}\,dt <\infty$ if $c> \frac{a^2}{2}$ and $\displaystyle \int_0^t \frac{1}{Y_s}\,ds<\infty$ for all $t\in[0,1)$ when $x>0$ and $c = \frac{a^2}{2}$.
		\item $\displaystyle\int_0^1 \frac{Y_t}{1-t}\,dt <\infty$ if $c>\frac{a^2}{2}$.
		\item $Y_t > 0$ for all $t\in (0,1)$.
		\item The process $V_t:= \int_0^t \frac{1}{Y_s}\,ds$ for $t\in[0,1]$ is strictly increasing and bounded for $c>\frac{a^2}{2}$. It is strictly increasing and locally bounded on $[0,1)$ when $c = \frac{a^2}{2}$ and $x>0$.
	\end{enumerate}
\end{corollary}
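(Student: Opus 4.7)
The plan is to exploit the equivalence of measures from Lemma \ref{lem:YCexist}: since the law of $Y$ is equivalent on every $\F_t$, $t<1$, to the law of $aR$ where $R$ is a $\delta$-dimensional Bessel bridge with $\delta = 2c/a^2 + 1 \ge 2$, every property that holds $P$-a.s.\ on $[0,t]$ for each $t<1$ and is stated as almost sure finiteness (or strict positivity) can be deduced from the corresponding statement for $aR$. Furthermore, for item (1) and item (2), both of which concern integrability over $[0,1]$ under $c>a^2/2$, one notes that $\mathcal{E}(M)$ is a uniformly integrable martingale on $[0,1]$ provided $f$ is continuous (hence bounded) on $[0,1]$, so that the measures remain equivalent on $\F_1$ as well. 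Thus the corollary reduces to four well-known facts about Bessel bridges of dimension $\delta \ge 2$.

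First I would dispatch item (3): for $\delta \ge 2$ the Bessel bridge $R$ is strictly positive on $(0,1)$ almost surely (see, e.g., \cite[Chapter XI]{RY}), hence the same is true for $Y$. Item (1) splits into two cases. If $\delta > 2$, a direct moment computation using the explicit density of $R_t$ (which is, up to normalization, proportional to the Bessel bridge transition density) shows that $\E[R_t^{-1}]$ is of order $t^{-1/2}(1-t)^{-1/2}$, which is integrable on $[0,1]$; a Fubini argument then gives $\int_0^1 Y_t^{-1}\,dt < \infty$ a.s. If $\delta = 2$ and $x>0$, then $Y_0 = x > 0$ and by item (3) and continuity, $Y$ is bounded away from $0$ on every $[0,t]$ with $t<1$, so $\int_0^t Y_s^{-1}\,ds < \infty$ trivially.

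Item (2) follows from the time-reversal property of Bessel bridges: the process $(R_{1-s})_{s\in[0,1]}$ is again a Bessel bridge of dimension $\delta$, and for $\delta>2$ one has the standard estimate $\E[R_{1-s}] = O(\sqrt{s})$ as $s \downto 0$. Rewriting $\int_0^1 R_t/(1-t)\,dt = \int_0^1 R_{1-s}/s\,ds$ and using Fubini gives $\E\int_0^1 R_t/(1-t)\,dt < \infty$, so the integral is finite almost surely. Item (4) is then immediate: $V_t = \int_0^t Y_s^{-1}\,ds$ is strictly increasing because $Y_s>0$ on $(0,1)$, and its boundedness (or local boundedness) on $[0,1]$ (respectively $[0,1)$) is exactly the content of item (1).

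The main obstacle is the integrability estimate in item (2), where the subtlety is the boundary behavior of $R$ at $t=1$: one needs that $R$ vanishes sufficiently fast near $t=1$ but not so fast that $R_t/(1-t)$ blows up non-integrably. The time-reversal identification is the clean way around this, reducing the question to the well-understood entrance behavior of a Bessel bridge at $t=0^+$. Once these ingredients are in place, items (1)--(4) for $Y$ follow by transferring the statements across the equivalence of measures.
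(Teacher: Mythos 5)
Your proof is correct and follows the same approach the paper implicitly relies on: transfer each item across the equivalence of laws from Lemma~\ref{lem:YCexist} to the $\delta$-dimensional Bessel bridge $aR$ with $\delta = 2c/a^2+1\ge 2$, checking that $\mathcal{E}(M)$ is uniformly integrable on $[0,1]$ because $f$ is bounded, and then use the moment estimates $\E[R_t^{-1}]=O\bigl([t(1-t)]^{-1/2}\bigr)$ and $\E[R_t]=O\bigl(\sqrt{1-t}\,\bigr)$ together with Tonelli. The paper gives no written proof and only points to facts about Bessel bridges (cf.\ \cite{RY}, \cite{GY_Bessel}, \cite{Hariya}), so your argument fills in exactly those details.
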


When $c>\frac{a^2}{2}$, we can define the right-continuous inverse $\tau_v := \inf\{t: V_t>v\}$ for $v\in[0,V_1)$. Since $V$ is strictly increasing $\tau$ is actually a two-sided inverse of $V$. Define the process $Z_v = Y_{\tau_v}$ and observe on the event $\{V_1>t\}$ and for $v\in[0,t)$
\begin{align*}
Z_v &= x+ aB_{\tau_v}+ \int_0^{\tau_v} \left(\frac{c}{Y_s} + f(s) - \frac{Y_s}{1-s}\right)\,ds\\
&= x+ aB_{\tau_v} + \int_0^{v}\left(\frac{c}{Y_{\tau_u}} + f(\tau_u) - \frac{Y_{\tau_u}}{1-\tau_u}\right)\,d\tau_u.
\end{align*} We observe that $$
d\tau_u = Y_{\tau_u}\,du = Z_u\,du 
$$ and, by Proposition V.1.5 and Theorem V.1.6 in \cite{RY}, we can write $B_{\tau_v} = \int_0^v\sqrt{Z_v}\,dW_v$ for a Brownian motion $W$. Hence, for $v\in[0,V_1)$ 
\begin{equation}\label{eqn:Zint}
Z_v = x + \int_0^v a\sqrt{Z_v}\,dW_v +\int_0^v \left(c+ f\left(\int_0^u Z_s\,ds \right)Z_u  - \frac{Z_u^2}{ 1- \int_0^u Z_s\,ds}\right)\,du.
\end{equation} 
Finally, we observe that $\lim_{v\upto V_1} Z_v = \lim_{t\upto 1} Y_t = 0$ almost surely.
Similarly, given a process $Z$ satisfying \eqref{eqn:Zint} the process $Y_t = Z_{V_t}$ with $V_t = \inf\{u: \int_0^u Z_s\,ds >t\}$ for $t\in[0,1)$ solves the stochastic differential equation \eqref{eqn:YsdeC}.

We have thus argued the following proposition:
\begin{proposition}\label{prop:existZC}
	For $f$ a continuous function there is weak existence for the stochastic differential equation \eqref{eqn:Zsde} when $c>\frac{a^2}{2}$ and $x\ge 0$. 
	
	Moreover, if $Z$ is any such solution then $Y_t = Z_{V_t}$ where $V_t = \inf\{u: \int_0^u Z_s\,ds >t\}$ for $t\in[0,1)$ solves \eqref{eqn:YsdeC}.
\end{proposition}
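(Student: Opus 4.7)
The plan is to formalize the time-change derivation sketched immediately before the statement: build the weak solution of \eqref{eqn:Zsde} by starting from a solution of \eqref{eqn:YsdeC} (given by Lemma \ref{lem:YCexist}) and inverting the additive functional $V_t=\int_0^t \frac{1}{Y_s}\,ds$. So the first step is to take, on some filtered probability space $(\Om,\F_t,P)$, a Brownian motion $B$ and a process $Y=(Y_t;t\in[0,1])$ satisfying \eqref{eqn:YsdeC}. Because $c>\frac{a^2}{2}$, Corollary \ref{cor:YCpp} parts (1) and (4) tell us that $V_t$ is almost surely finite, continuous, and strictly increasing on $[0,1]$, with $V_0=0$ and $V_1<\infty$. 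Let $\tau_v:=V^{-1}(v)$ denote its continuous two-sided inverse on $[0,V_1)$, and define $Z_v:=Y_{\tau_v}$ and $C_v:=\tau_v$. By construction $dC_v=d\tau_v=Y_{\tau_v}\,dv=Z_v\,dv$, so the second coordinate of \eqref{eqn:Zsde} holds automatically.

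Next I would translate the dynamics of $Y$ into dynamics of $Z$. Writing the SDE \eqref{eqn:YsdeC} in integral form and substituting $t=\tau_v$ gives, for $v\in[0,V_1)$,
\begin{equation*}
Z_v = x + aB_{\tau_v} + \int_0^v \Bigl(\tfrac{c}{Y_{\tau_u}} + f(\tau_u) - \tfrac{Y_{\tau_u}}{1-\tau_u}\Bigr)\,d\tau_u,
\end{equation*}
and using $d\tau_u = Z_u\,du$, $Y_{\tau_u}=Z_u$, $\tau_u=C_u$ the drift collapses to exactly the drift appearing in \eqref{eqn:Zsde}. The continuous local martingale $M_v := aB_{\tau_v}$ has quadratic variation $\langle M\rangle_v = a^2\int_0^v Z_u\,du$, so by the Dambis--Dubins--Schwarz theorem (Theorem V.1.6 in \cite{RY}), possibly after enlarging the probability space, there is a Brownian motion $W$ with $M_v = \int_0^v a\sqrt{Z_u}\,dW_u$. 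Substituting this representation yields \eqref{eqn:Zint}, i.e.\ \eqref{eqn:Zsde}, on $[0,V_1)$.

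Finally I would extend $(Z,C)$ past $V_1$ and verify the converse. Since $Y_t\to 0$ as $t\uparrow 1$, we have $Z_v\to 0$ and $C_v\to 1$ as $v\uparrow V_1$; I extend by $Z_v\equiv 0$ and $C_v\equiv 1$ for $v\ge V_1$, which is consistent with \eqref{eqn:Zsde} because both the diffusion coefficient and the finite part of the drift vanish once $Z\equiv 0$, and the singular term $Z_v^2/(1-C_v)$ is never actually called upon since $Z_v=0$. This produces a global weak solution. For the converse stated in the proposition, given any weak solution $(Z,C)$ to \eqref{eqn:Zsde} with $c>\frac{a^2}{2}$, set $V_t:=\inf\{u:C_u>t\}$; then $V$ is the two-sided inverse of $C$ on $[0,1)$, and running the same Dambis--Dubins--Schwarz/time-change computation in reverse shows that $Y_t := Z_{V_t}$ solves \eqref{eqn:YsdeC}.

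The main obstacle will be justifying the Dambis--Dubins--Schwarz application together with the behavior at the two degenerate endpoints $v=0$ and $v=V_1$. At $v=0$ one has to allow $x=0$ (where $Y$ starts at zero and $1/Y_s$ is only locally integrable near $0$ via Corollary \ref{cor:YCpp}(1)); at $v=V_1$ one has to check that the extension $Z\equiv 0$ is compatible with \eqref{eqn:Zsde} and that no explosion occurs before $V_1$. Both issues are handled by Corollary \ref{cor:YCpp}: positivity of $Y$ on $(0,1)$ and finiteness of $V_1$ prevent any interior singularity, and $\lim_{v\upto V_1}Z_v=0$ makes the extension canonical.
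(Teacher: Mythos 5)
Your construction mirrors the paper's argument: take $Y$ solving \eqref{eqn:YsdeC} via Lemma \ref{lem:YCexist}, use Corollary \ref{cor:YCpp} for finiteness and strict monotonicity of $V_t=\int_0^t Y_s^{-1}\,ds$, invert to get $\tau_v$, set $Z_v = Y_{\tau_v}$, $C_v=\tau_v$, and use Dambis--Dubins--Schwarz together with Proposition V.1.5 of \cite{RY} to realize $aB_{\tau_v}$ as $\int_0^v a\sqrt{Z_u}\,dW_u$; the converse direction runs the same time-change backward. This is exactly the paper's derivation leading to \eqref{eqn:Zint}.

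The one place your reasoning goes wrong is the extension past $V_1$. You write that putting $Z_v\equiv 0$, $C_v\equiv 1$ for $v\ge V_1$ is ``consistent with \eqref{eqn:Zsde} because both the diffusion coefficient and the finite part of the drift vanish once $Z\equiv 0$.'' That is false: the drift in \eqref{eqn:Zsde} is $c + f(C_v)Z_v - Z_v^2/(1-C_v)$, and the constant $c > a^2/2 > 0$ does not vanish when $Z\equiv 0$. Taken at face value, the extended process would satisfy $dZ_v = c\,dv \neq 0$ for $v>V_1$, contradicting $Z\equiv 0$. The paper avoids this difficulty by establishing \eqref{eqn:Zint} only for $v\in[0,V_1)$ and then merely recording $\lim_{v\uparrow V_1}Z_v = 0$; weak existence is understood on $[0,V_1)$ with $Z$ absorbed at zero at time $V_1$, which is the same convention Pitman uses for the $x=0$, $f=0$ case. (One could alternatively try to argue that $Z_v^2/(1-C_v)\to c$ as $v\uparrow V_1$, so that the drift genuinely cancels in the limit, but that is a nontrivial boundary estimate on Bessel bridges that neither you nor the paper establish, and it is not needed under the convention just stated.)
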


\subsection{Analysis of \eqref{eqn:ZsdeUC}} \label{sec:ZUC}

Throughout this section we assume that $f:\R_+\to \R$ is a continuous function and $g:\R_+\to \R$ is a function such that $g^2$ is Lipschitz and there is some $\eps>0$ and $M<\infty$ such that $\eps\le g \le M$. 

We begin by observing that since $g^2$ is Lipschitz and bounded below, the function $\frac{1}{g^2}$ is Lipschitz. Now define the function $h(t) = \inf\{v: \int_0^v g^2(s)\,ds>t\}$, and observe that this function is continuous since $g^2$ is strictly positive and moreover, it is the unique solution to $$ h'(t) = \frac{1}{g^2(h(t))},\qquad h(0) = 0.$$ Observe that for each $c\ge 0$ the function $\tilde{b}(t) = \frac{2c}{g^2(t)}+1$ is bounded and continuous. If we define
$$
b(t) = \tilde{b}(h(t)),
$$ then $b(t)$ is also continuous. 

Since $\tilde{b}$ is continuous, by \cite[Proposition 1]{FG_bbvd} and \cite[Theorem IX.3.5]{RY}, for each $c\ge 0$ there exists a unique strong solution to the stochastic differential equation
\begin{equation*}
dX_t = 2\sqrt{X_t}\,d\tilde{B}_t + {b}(t)\,dt ,\qquad X_0 = x\ge 0, 
\end{equation*} where $\tilde{B}$ is a standard Brownian motion on some filtered probability space $(\Om,\F_t,\tilde{P})$. But this means that there is a weak solution to the stochastic differential equation
\begin{equation*}\begin{split}
dX^{(1)}_t &= 2\sqrt{X_t^{(1)}} \,d\tilde{B}_t +\tilde{b}(X^{(2)}_t)\,dt, \qquad X_0^{(1)} =x \ge 0\\
dX^{(2)}_t &= \frac{1}{g^2(X^{(2)}_t)}dt,\hspace{1.05in} X_0^{(2)} = 0,
\end{split}
\end{equation*} since $b(t) = \tilde{b}(h(t))$ and $X^{(2)}_t = h(t)$. Since $g$ satisfies $\eps\le g\le M$, by \cite[Proposition IX.1.13]{RY}, there is a weak solution to the stochastic differential equation:
\begin{equation*}
\begin{split}
dX^{(1)}_t &= 2g(X^{(2)}_t)\sqrt{X_t^{(1)}} \,d\tilde{B}_t +g^2(X^{(2)}_t)\tilde{b}(X^{(2)}_t)\,dt, \qquad X_0^{(1)} =x \ge 0\\
dX^{(2)}_t &= dt,\hspace{2.2in} X_0^{(2)} = 0.
\end{split}
\end{equation*} But this means that $X^{(1)}$ solves
\begin{equation}\label{eqn:Xref1}
dX_t = 2g(t)\sqrt{X_t}\,d\tilde{B}_t + \left(2c+ g^2(t) \right)\,dt,\qquad X_0=x\ge 0.
\end{equation}

The next lemma states that the process $X$ can be bounded below by a deterministic time-change of a squared Bessel process $S = (S(t);t\ge 0)$. We recall \cite{RY} that a squared Bessel process of dimension $\delta\ge 0$ and starting from $x \ge 0$ is the unique strong solution of the stochastic differential equation
\begin{equation*}
dS(t) = 2\sqrt{S(t)}\,dB_t + \delta \,d t ,\qquad S(0) = x,
\end{equation*} for a standard Brownian motion $B$.
\begin{lemma}
	Suppose $X$ is a solution to \eqref{eqn:Xref1} with respect to a Brownian motion $\tilde{B}$ and started from $x\ge0$. Fix any $0\le \delta\le\inf_{t} \left(\frac{2c}{g^2(t)}+1\right)$. Then on the same probability space $(\Om,\F,P)$ there exists a $\delta$-dimensional squared Bessel process $S = (S(t);t\ge0)$ starting from $x$ such that
	$$
	P\left(	S\left(\int_0^t g^2(r)\,dr\right) \le X_t, \forall t \right) = 1.
	$$
\end{lemma}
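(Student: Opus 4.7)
The plan is to remove the time-dependent diffusion coefficient of \eqref{eqn:Xref1} by a deterministic time change, obtaining an inhomogeneous $\BESQ$-type equation whose drift dominates $\delta$, and then invoke a one-dimensional SDE comparison against a genuine $\delta$-dimensional squared Bessel process driven by the same Brownian motion. Set $\tau(t) = \int_0^t g^2(r)\,dr$ and $\sigma = \tau^{-1}$; since $\eps \le g \le M$, $\tau$ is a continuous, strictly increasing homeomorphism of $\R_+$ with $\eps^2 t \le \tau(t) \le M^2 t$. Define $\tilde{X}_u := X_{\sigma(u)}$. The martingale part $M_t = \int_0^t 2g(s)\sqrt{X_s}\,d\tilde{B}_s$ has $d\langle M\rangle_t = 4g^2(t)X_t\,dt$, so the change of variables $s = \sigma(v)$ gives $\langle M\rangle_{\sigma(u)} = \int_0^u 4\tilde{X}_v\,dv$. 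Applying the time-change representation for continuous local martingales (Theorem V.1.6 in \cite{RY}), possibly after enlarging the probability space, we may write the time-changed martingale as $\int_0^u 2\sqrt{\tilde{X}_v}\,dW_v$ for a standard Brownian motion $W$. A parallel change of variables transforms the drift $\int_0^{\sigma(u)}(2c+g^2(s))\,ds$ into $\int_0^u\bigl(\tfrac{2c}{g^2(\sigma(v))}+1\bigr)\,dv$, so $\tilde{X}$ satisfies
\begin{equation*}
\tilde{X}_u = x + \int_0^u 2\sqrt{\tilde{X}_v}\,dW_v + \int_0^u \left(\frac{2c}{g^2(\sigma(v))} + 1\right)dv.
\end{equation*}

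By hypothesis, $\delta \le \inf_t\bigl(\tfrac{2c}{g^2(t)}+1\bigr)$, so the drift coefficient of $\tilde{X}$ is bounded below pointwise by $\delta$. Let $S = (S(u);u\ge 0)$ denote the unique strong solution to $dS(u) = 2\sqrt{S(u)}\,dW_u + \delta\,du$ with $S(0) = x$, driven by the very same Brownian motion $W$; this is the desired $\delta$-dimensional squared Bessel process. The two SDEs share the common diffusion coefficient $s\mapsto 2\sqrt{s}$, which is $1/2$-H\"older continuous, and both drifts are (trivially) Lipschitz in the space variable. Thus the Yamada–Watanabe comparison theorem (Theorem IX.3.7 in \cite{RY}) applies and yields $S(u) \le \tilde{X}_u$ for all $u\ge 0$ almost surely. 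Substituting $u = \tau(t)$ gives $S\bigl(\int_0^t g^2(r)\,dr\bigr) \le \tilde{X}_{\tau(t)} = X_t$ for all $t$, which is the claim.

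The one genuine subtlety is the representation of the time-changed martingale as a Brownian stochastic integral: the integrand $2\sqrt{\tilde{X}_v}$ can vanish, so the representing Brownian motion is not automatically a measurable function of $\tilde{B}$ and in general requires an enlargement of the probability space carrying an independent Brownian motion on which to build $W$ on the zero set of $\tilde{X}$. This is handled exactly as in the classical Dambis–Dubins–Schwarz construction cited above. Beyond this, the argument is purely bookkeeping with a smooth deterministic time change and the one-dimensional comparison theorem.
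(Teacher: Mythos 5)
Your argument is essentially identical to the paper's: both introduce the deterministic time change by the inverse of $t\mapsto\int_0^t g^2(r)\,dr$, time-change $X$ to obtain an inhomogeneous $\BESQ$-type SDE with drift bounded below by $\delta$, invoke the Dambis--Dubins--Schwarz / martingale representation to produce a driving Brownian motion for the time-changed process, and then apply the one-dimensional comparison theorem (Theorem IX.3.7 in \cite{RY}) against the $\delta$-dimensional squared Bessel process driven by that same Brownian motion. The only difference is notational (your $\sigma$ is the paper's $\tau$), plus you flag the possible enlargement of the probability space needed for the representation when $\sqrt{\tilde{X}}$ vanishes, a point the paper leaves implicit.
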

\begin{proof}
	We prove this lemma by a time-change. Let $\tau_t = \inf\{ s: \int_0^s g^2(r)\,dr>t\}$ and define the process $R$ by $
	R_{t} = X_{\tau_t}.
	$ Observe
	\begin{align*}
	R_t &= x+ \int_0^{\tau_t}2 g(r) \sqrt{X_r}\,d\tilde{B}_r + \int_0^{\tau_t} \left(2c+g^2(r) \right)\,dr\\
	&= x+ M_{\tau_t} + \int_0^t \left(\frac{2c}{g^2(\tau_s)}+1\right)\,ds
	\end{align*} where $M_t = \int_0^t2g(r)\sqrt{X_r}\,d\tilde{B}_r$ and we used the change of variable $r = \tau_s$ in the drift integral. We now observe
	$$
	\langle M_{\tau_\cdot} \rangle_{t} = \langle M \rangle_{\tau_t} = \int_0^{\tau_t} 4g^2(r) {X_r}\,dr =\int_0^t 4 X_{\tau_s}\,ds =  \int_0^t 4 R_s\,ds,
	$$ and so by the Dambis-Dubins-Schwarz theorem \cite[Theorem V.1.6]{RY} there is a Brownian motion $B$ such that $$
	R_t = x+ \int_0^t 2\sqrt{R_s}\,dB_s + \int_0^t \left(\frac{2c}{g^2(\tau_s)}+1\right)\,ds.
	$$
	For each $\delta\ge0$ and with respect to this Brownian motion $B$, there exists a unique strong solution to the stochastic differential equation
	\begin{equation*}
	dS_t = 2\sqrt{S_t}\,dB_t + \delta\,dt,\qquad S_t = x.
	\end{equation*} The comparison theorems \cite[Theorem IX.3.7]{RY} imply that $$
	P(S_t \le R_t,\forall t) = 1
	$$ for any $0\le \delta\le \inf_{t} \left(\frac{2c}{g^2(t)} +1\right)$. The desired claim now follows.
\qed \end{proof}

Arguments similar to Lemma \ref{lem:YCexist} and Corollary \ref{cor:YCpp} give the following lemma, the details of which are omitted:
\begin{lemma}\label{lem:YsquareExistence} Suppose $f$ is a continuous function and $g^2$ is a Lipschitz function with $0<\eps\le g\le M<\infty$. There exists a weak solution to the stochastic differential equation:
	\begin{equation*}
	dX_t = 2g(t)\sqrt{X_t} \,dB_t + \left( 2c+g^2(t) + 2f(t)\sqrt{X_t} \right)\,dt,\qquad X_0 = x\ge0.
	\end{equation*}
	
	Moreover, for any such solution the following hold almost surely:
	\begin{enumerate}
		
		\item If $c>\frac{1}{2}\sup_{t} g^2(t)$ and $x \ge 0$ or $c = \frac{1}{2}\sup_{t} g^2(t)$ and $x>0$, then $\displaystyle \int_0^t X_s^{-1/2}\,ds <\infty$ for each $t<\infty$.
		
		\item If $c\ge \frac{1}{2}\sup_{t} g^2(t)$ and $x > 0$ then $\inf\{t: X_t = 0\} = \infty$.

	\end{enumerate}	
	
\end{lemma}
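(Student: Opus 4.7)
The strategy is to combine the ingredients already assembled just above the lemma: the weak existence result for the driftless SDE \eqref{eqn:Xref1}, the Girsanov trick used in the derivation of Lemma \ref{lem:YCexist}, and the Bessel lower bound $S(\int_0^t g^2(r)\,dr)\le X_t$ for the driftless process, where $S$ is a $\delta$-dimensional squared Bessel from $x$ with $\delta$ any value in $[0,\,2c/\sup_t g^2(t)+1]$. First, I would introduce the drift $2f(t)\sqrt{X_t}$ via a locally equivalent change of measure; then I would read off (1) and (2) directly from the Bessel comparison, which remains valid almost surely under the new measure.

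\textbf{Girsanov step.} Let $X$ be a weak solution of \eqref{eqn:Xref1} on $(\Om,\F_t,\tilde{P})$ driven by $\tilde{B}$, and set $M_t := \int_0^t (f(s)/g(s))\,d\tilde{B}_s$. Because $f$ is continuous and $g\ge \eps >0$, the integrand is bounded on each $[0,T]$, so Novikov's criterion gives that $\mathcal{E}(M)$ is a true martingale on $[0,T]$ and Girsanov's theorem produces a measure $P^T\sim\tilde{P}$ on $\F_T$ under which $B_t:=\tilde{B}_t-\int_0^t (f(s)/g(s))\,ds$ is a Brownian motion. The family $\{P^T\}_{T>0}$ is consistent in $T$ and extends to a measure $P$ on $\F_\infty$ that is locally equivalent to $\tilde{P}$. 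Substituting $d\tilde B_t = dB_t+(f(t)/g(t))\,dt$ into \eqref{eqn:Xref1} and collecting drift terms immediately yields the claimed SDE, giving weak existence.

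\textbf{Proof of (1) and (2).} Because $P$ and $\tilde{P}$ agree on null sets on every $\F_T$, the two pointwise assertions may be verified under $\tilde{P}$, where the Bessel lower bound is available. For (1), assume $c>\tfrac12\sup_t g^2(t)$ and pick $\delta>2$ in the comparison. A squared Bessel of dimension strictly greater than two satisfies $\int_0^T S_u^{-1/2}\,du<\infty$ a.s.: for $x=0$ this follows from self-similarity $S_u\overset{d}{=}u\cdot S_1$ and the moment bound $\E[S_1^{-1/2}]<\infty$ (valid when $\delta>1$), via Fubini; for $x>0$ it is trivial. The change of variables $u=\int_0^s g^2(r)\,dr$ is bi-Lipschitz because $\eps^2\le g^2\le M^2$, so the bound transfers to $\int_0^t X_s^{-1/2}\,ds$. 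In the borderline case $c=\tfrac12\sup_t g^2(t)$ with $x>0$, take $\delta=2$; a two-dimensional squared Bessel from a strictly positive start a.s.\ never hits zero, so $S^{-1/2}$ is continuous on bounded intervals and the integrability is immediate. For (2), $c\ge \tfrac12\sup_t g^2(t)$ permits $\delta\ge 2$ in the comparison, and $\BESQ^\delta$ with $\delta\ge 2$ started from $x>0$ classically never returns to zero, so $X_t\ge S(\int_0^t g^2(r)\,dr)>0$ for every $t\ge 0$.

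\textbf{Main obstacle.} The only delicate point is making sure that the Bessel comparison constructed on the driftless probability space survives the Girsanov perturbation; this is resolved by noting the comparison is an a.s.\ sample-path statement and null sets are preserved under locally equivalent measure changes, so one does not need to redo the coupling on $(\Om,\F_t,P)$. The borderline dimension $\delta=2$ in (1) is the only other place requiring case-specific care, and is handled by the positivity of $\BESQ^2$ started from $x>0$.
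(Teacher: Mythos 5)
Your proposal is correct and matches the approach the paper intends but leaves implicit: add the drift $2f(t)\sqrt{X_t}$ to \eqref{eqn:Xref1} via a Girsanov change of measure with bounded integrand $f/g$ (exactly parallel to Lemma \ref{lem:YCexist}), then transfer the squared-Bessel lower bound $S\left(\int_0^t g^2(r)\,dr\right)\le X_t$ from the driftless reference measure to the new one using local equivalence, which is what the immediately preceding comparison lemma is there for. The only thing worth spelling out slightly more is that the lemma's ``for any such solution'' clause is handled by running the Girsanov step in reverse for an arbitrary weak solution (or by invoking uniqueness in law of \eqref{eqn:Xref1} as in Lemma \ref{lem:YCexist}); your ``Main obstacle'' paragraph already contains the key observation that makes this work.
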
 

Observe that if $X_0 = x>0$ then $X_t$ almost surely never reaches $0$. Therefore, we can apply It\^o's rule to $Y_t = \sqrt{X_t}$ and see
\begin{align*}
dY_t &= \frac{1}{2\sqrt{X_t}} dX_t - \frac{1}{8X^{3/2}_t} d\langle X\rangle_t\\
&= g(t) dB_t + \left( \frac{c+ \frac{1}{2}g^2(t)}{\sqrt{X_t}} + f(t) \right)\,dt - \frac{1}{8 X_t^{3/2}} (4 g^2(t) X_t)\,dt\\
&= g(t)dB_t + \left(\frac{c}{Y_t} +f(t) \right)\,dt.
\end{align*} Hence we have argued the following lemma:
\begin{lemma} \label{lem:YUCpp} Suppose $f$ is a continuous function and $g^2$ is a Lipschitz function with $0<\eps\le g\le M<\infty$. Assume that $c\ge \sup_t \frac{1}{2} g^2(t)$ and $x>0$. There exists a weak solution to the stochastic differential equation
	\begin{equation}\label{eqn:YsdeUC}
	dY_t = g(t)\,dB_t + \left(\frac{c}{Y_t} + f(t) \right)\,dt,\qquad Y_0=x>0.
	\end{equation}
	
	Moreover, any such solution is strictly positive and so the process $V_t = \int_0^t \frac{1}{Y_s}\,ds$ is continuous and strictly increasing. 
\end{lemma}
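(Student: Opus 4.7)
The plan is to reduce the SDE \eqref{eqn:YsdeUC} to the squared SDE already analyzed in Lemma \ref{lem:YsquareExistence} via the bijection $Y \leftrightarrow Y^2$, which is the same trick just sketched in the paragraph preceding the statement but now rigorously invoked.

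For weak existence, I would apply Lemma \ref{lem:YsquareExistence} with the same $c$, $g$, $f$ and initial condition $x^2 > 0$ to obtain a filtered probability space carrying a Brownian motion $B$ and a process $X$ satisfying
\begin{equation*}
dX_t = 2g(t)\sqrt{X_t}\,dB_t + \left(2c + g^2(t) + 2f(t)\sqrt{X_t}\right)dt, \qquad X_0 = x^2.
\end{equation*}
Since $c \geq \tfrac{1}{2}\sup_t g^2(t)$ and $x^2 > 0$, part (2) of Lemma \ref{lem:YsquareExistence} gives $\inf\{t : X_t = 0\} = \infty$ almost surely. Hence $\phi(u) = \sqrt{u}$ is $C^2$ along the path of $X$ and Itô's formula applies to $Y_t := \sqrt{X_t}$. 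Using $d\langle X\rangle_t = 4g^2(t)X_t\,dt$, the computation already displayed in the excerpt collapses to
\begin{equation*}
dY_t = g(t)\,dB_t + \left(\frac{c}{Y_t} + f(t)\right)dt, \qquad Y_0 = x,
\end{equation*}
which is exactly \eqref{eqn:YsdeUC}.

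For the ``moreover'' assertion, let $Y$ be any weak solution of \eqref{eqn:YsdeUC}. The drift term $c/Y_t$ forces $Y$ to be strictly positive on its lifetime, so $X_t := Y_t^2$ is well defined, and a direct application of Itô's formula (in the forward direction) shows that $X$ solves the squared SDE with $X_0 = x^2 > 0$. Part (2) of Lemma \ref{lem:YsquareExistence} then re-confirms that $X$ never reaches zero, so $Y_t > 0$ for all $t \geq 0$. Finally, part (1) of the same lemma supplies
\begin{equation*}
\int_0^t \frac{1}{Y_s}\,ds = \int_0^t X_s^{-1/2}\,ds < \infty, \qquad \forall\, t<\infty,
\end{equation*}
so $V_t$ is finite for every $t$; continuity and strict monotonicity of $V$ are immediate because $1/Y$ is continuous and strictly positive.

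The only non-routine point is the validity of Itô's formula for the non-smooth function $\sqrt{\cdot}$, which is exactly what the strict-positivity statement Lemma \ref{lem:YsquareExistence}(2) provides; without the lower bound $c \geq \tfrac{1}{2}\sup_t g^2(t)$ the squared diffusion could touch zero and the reduction $X \leftrightarrow Y^2$ would break down. Everything else is algebraic bookkeeping on top of Lemma \ref{lem:YsquareExistence}.
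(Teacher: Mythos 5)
Your proof is correct and follows essentially the same route as the paper: reduce to the squared diffusion of Lemma~\ref{lem:YsquareExistence}, use its pathwise positivity (part (2)) to apply It\^o's formula to $\sqrt{X}$, and read off \eqref{eqn:YsdeUC}. Two small points where your write-up is a bit more careful than the paper's exposition: you take the initial condition $X_0 = x^2$ so that $Y_0 = \sqrt{X_0} = x$ exactly (the paper reuses the letter $x$ and silently gets $Y_0=\sqrt{x}$), and you make the ``moreover'' claim for \emph{arbitrary} weak solutions explicit by running It\^o in the forward direction on $X := Y^2$ and then invoking Lemma~\ref{lem:YsquareExistence}(2), whereas the paper only establishes the properties for the constructed solution. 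One cosmetic remark: the sentence ``the drift term $c/Y_t$ forces $Y$ to be strictly positive on its lifetime'' is circular as stated --- what you actually need (and what you then supply) is that on $[0,\tau_0)$ with $\tau_0 = \inf\{t: Y_t=0\}$ the process $X = Y^2$ solves the squared SDE, and Lemma~\ref{lem:YsquareExistence}(2) forces $\tau_0 = \infty$. Also, once $Y>0$ everywhere is established, the finiteness and continuity of $V_t = \int_0^t Y_s^{-1}\,ds$ follows immediately from continuity and positivity of $Y$ on compacts, so the appeal to Lemma~\ref{lem:YsquareExistence}(1) is not actually needed.
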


Finally, we use a time change and obtain the following proposition.
\begin{proposition}\label{prop:existZUC}
	Suppose $f$ is a continuous function and $g^2$ is a Lipschitz function with $0<\eps\le g\le M<\infty$. Assume that $c\ge \sup_t \frac{1}{2} g^2(t)$ and $x>0$. Then there exists a weak solution to \eqref{eqn:ZsdeUC}. Moreover, for any such solution $Z$, the process $C_t = \int_0^t Z_s\,ds$ is strictly increasing.
\end{proposition}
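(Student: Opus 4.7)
The plan is to construct a weak solution to \eqref{eqn:ZsdeUC} by a time change that inverts the construction leading to Lemma \ref{lem:YUCpp}, mirroring exactly the argument in Section \ref{sec:ZC} that took us from \eqref{eqn:YsdeC} to \eqref{eqn:Zint}. First, I would invoke Lemma \ref{lem:YUCpp} to produce, on some filtered space $(\Omega, \F_t, P)$ with driving Brownian motion $B$, a weak solution $Y$ to \eqref{eqn:YsdeUC} that is strictly positive for all $t \ge 0$ a.s. The additive functional $V_t := \int_0^t Y_s^{-1}\,ds$ is then continuous and strictly increasing, so its two-sided inverse $\tau_v := \inf\{t : V_t > v\}$ is well defined on $[0, V_\infty)$. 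Set $Z_v := Y_{\tau_v}$ and $C_v := \tau_v$.

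Next I would verify the two equations in \eqref{eqn:ZsdeUC}. For the equation $dC_v = Z_v\,dv$, applying change of variables to the strictly increasing continuous function $V$ gives
\[
C_v = \tau_v = \int_0^v d\tau_u = \int_0^v Y_{\tau_u}\,du = \int_0^v Z_u\,du.
\]
For the equation on $Z$, decompose $Y_t = x + M_t + A_t$ with $M_t = \int_0^t g(s)\,dB_s$ and $A_t = \int_0^t \bigl( c/Y_s + f(s) \bigr)\,ds$. Substituting $s = \tau_u$ (so $ds = Y_{\tau_u}\,du = Z_u\,du$) in the drift yields
\[
A_{\tau_v} = \int_0^v \Bigl( \frac{c}{Y_{\tau_u}} + f(\tau_u) \Bigr) Z_u\,du = \int_0^v \bigl( c + f(C_u) Z_u \bigr)\,du,
\]
since $Y_{\tau_u} = Z_u$ and $\tau_u = C_u$. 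For the martingale part, the quadratic variation of $M$ is $\langle M \rangle_t = \int_0^t g^2(s)\,ds$, so $\langle M \circ \tau \rangle_v = \int_0^{\tau_v} g^2(s)\,ds = \int_0^v g^2(C_u) Z_u\,du$. By the Dambis--Dubins--Schwarz representation, namely Proposition V.1.5 and Theorem V.1.6 in \cite{RY} applied to the time-changed filtration $\mathcal{G}_v := \F_{\tau_v}$, there exists (on a possibly enlarged probability space) a Brownian motion $W$ such that $M_{\tau_v} = \int_0^v g(C_u)\sqrt{Z_u}\,dW_u$. Adding the drift and martingale pieces gives the integrated form of \eqref{eqn:ZsdeUC}.

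Finally, strict monotonicity of $C$ is immediate from the construction: $Z_v = Y_{\tau_v} > 0$ almost surely because $Y > 0$ everywhere by Lemma \ref{lem:YUCpp}, and hence $C_v = \int_0^v Z_u\,du$ is strictly increasing.

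The only subtle step is the representation of the time-changed local martingale $M_{\tau_\cdot}$ as a stochastic integral against a standard Brownian motion in the new time variable $v$; this requires that $\tau_v$ be a stopping time for each $v$ in the original filtration, which holds since $V$ is a continuous adapted functional, and one must pass to the time-changed filtration $\mathcal{G}_v = \F_{\tau_v}$ as in \cite[Chapter V]{RY}. Once this point is invoked cleanly, the rest reduces to the deterministic change-of-variable calculations above.
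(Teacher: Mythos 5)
Your construction of a weak solution by time-changing $Y$ from Lemma \ref{lem:YUCpp} is exactly the route the paper takes (the paper simply says the existence part follows from arguments similar to Proposition \ref{prop:existZC}, which is the same time-change mirroring you carry out explicitly). The change-of-variables on the drift, the quadratic variation computation, and the appeal to Dambis--Dubins--Schwarz are all fine and match the paper.

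However, there is a real gap in the second half. The proposition asserts strict monotonicity of $C$ \emph{for any weak solution} $(Z,C)$, not merely for the one you built via time change. Your argument "strict monotonicity is immediate from the construction: $Z_v = Y_{\tau_v}>0$" only addresses the constructed solution. Since the paper does not establish uniqueness in law for \eqref{eqn:ZsdeUC} (nor does your proposal), you cannot transfer strict positivity from your constructed solution to an arbitrary one. The paper closes this gap with a separate contradiction argument: starting from an \emph{arbitrary} solution $(Z,C)$ driven by a Brownian motion $W$, it sets $\tau=\inf\{t:Z_t=0\}$, assumes $\tau<\infty$, lets $h=C_\tau>0$, shows that the time-changed process $Y_t=Z_{V_t}$ solves \eqref{eqn:YsdeUC} on $[0,h)$ with respect to a suitable Brownian motion obtained from DDS, and then uses Lemma \ref{lem:YUCpp} (positivity, in fact boundedness away from zero on compacts) to conclude $Z_\tau=\lim_{r\uparrow h}Y_r>0$, a contradiction. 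You should add this step, or otherwise justify that strict positivity holds for every weak solution, before the "moreover" clause is actually proved.
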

\begin{proof}
	The proof of existence is omitted, since it follows from the arguments similar to those in Proposition \ref{prop:existZC}. 
	
	We now argue that $C_t$ is strictly increasing.  To this end let $(Z,C)$ be a solution to \eqref{eqn:ZsdeUC} with respect to some Brownian motion $W$. To argue that $C$ is strictly increasing, we argue its derivative $Z$ is strictly positive. To this end, we argue by contradiction and suppose that $\tau = \inf\{t: Z_t = 0\}<\infty$. First $\tau>0$ by continuity and the fact $Z_0 = x>0$. Hence, $Z_t>0$ for $t\in[0,\tau)$ and hence $C_t$ is strictly increasing for $t\in[0,\tau)$ and we set $h = C_\tau>0$.
	
	We now define the right-continuous inverse of $C$ as $V_t = \inf\{r: C_r>t\}$ and define $Y_t = Z_{V_t}$. Observe that for $t\in [0,h)$ we have
	\begin{align*}
	Y_t &= x+ \int_0^{V_t} g(C_u) \sqrt{Z_u}\,dW_u+ \int_0^{V_t} (c+ f(C_u)Z_u) \,du\\
	&= x + \tilde{M}_{V_t} + \int_0^t \left(\frac{c}{Z_{V_u}} + f(u)\right) \,dt\\
	&= x+ \tilde{M}_{V_t} + \int_0^t \left(\frac{c}{Y_u}+f(u) \right)\,dt
	\end{align*} where $\tilde{M}_t = \int_0^t g(C_u)\sqrt{Z_u}\,dW_u$. 
	
	Moreover, we have that $$
	\langle\tilde{M}_{V_\cdot}\rangle_t = \langle\tilde{M}\rangle_{V_t}  = \int_0^{V_t} g^2(C_u) Z_u\,du = \int_0^t g^2(t)\,dt.
	$$ Hence, by Dambis-Dubins-Schwarz \cite[Theorem V.1.6]{RY} there is a Brownian motion $B$ on the interval $[0,h)$ such that
	$$
	Y_t = x+ \int_0^{t}g(u)\,dB_u + \int_0^t \left(\frac{c}{Y_u} + f(u)\right)\,ds,\qquad t\in[0,h).
	$$
	
	Moreover, $V_t = \int_0^t \frac{1}{Y_s}\,ds$ is continuous and strictly increasing by Lemma \ref{lem:YUCpp} and the stochastic process $Y$ is strictly positive and uniformly bounded away from zero on bounded time intervals. Hence $$
	Z_{\tau} = \lim_{r\upto h} Z_{V_r} = \lim_{r\upto h} Y_r > 0.
	$$ This gives the desired contradiction and we conclude that $Z_t$ is always strictly positive and hence $C_t = \int_0^t Z_u\,du $ is strictly increasing.
\qed \end{proof}

\section{Proofs of Normality Results}\label{sec:intRel}

\subsection{Proof of Theorem \ref{thm:bridgeLT}}

Throughout this subsection we fix a $c>\frac{a^2}{2}$ and $x\ge 0$ and let $Y$ and $Z$ be related as in Section \ref{sec:ZC}. In particular, we know that $Y$ and $Z$ solve \eqref{eqn:YsdeC} and \eqref{eqn:Zsde}, respectively, and are related by $Y_t = Z_{V_t}$ where $V_t = \int_0^t \frac{1}{Y_s}\,ds$. As we have seen the right-continuous inverse of $V_t$ is equal to $\int_0^t Z_u\,du$. 

\begin{proof}[Proof of Theorem \ref{thm:bridgeLT}] The proof is quite similar to the proofs in \cite[Theorem 1.1]{Hariya} and \cite[Theorem 1.11]{GS_paper}.
	
	Since $Y$ solves \eqref{eqn:YsdeC}, we observe
	\begin{align*}
	\int_0^{1} Y_u\,du &= x+ \int_0^{1} a B_u\,du + \int_0^{1} \left(\int_0^u \left\{\frac{c}{Y_s} +f(s) - \frac{ Y_s}{1-s} \right\}\,ds \right)\,du\\
	&= x+\int_0^{1} aB_t \,dt + \int_0^{1}cV_u\,du+ \int_0^{1} \left(\int_0^u \left\{f(s) - \frac{ Y_s}{1-s} \right\}\,ds \right)\,du.
	\end{align*}
	Corollary \ref{cor:YCpp} allows us to apply Fubini's theorem to the $\frac{Y_s}{1-s}$ integrand to obtain:
	\begin{equation*}
	\int_0^1 \int_0^u \frac{Y_s}{1-s}\,ds\,du = \int_0^1 ds\left\{\int_s^1 \frac{Y_s}{1-s}\,du \right\}  = \int_0^1 Y_s\,ds.
	\end{equation*}
	
	Hence we have
	\begin{equation*}
	\int_0^1 (2Y_u - cV_u)\,du = x+a\int_0^1 B_s\,ds + \int_0^1 \int_0^u f(s)\,ds\,du
	\end{equation*}
	
	It follows that 
	$$
	\int_0^1 \left(2Z_{V_t} - cV_t\right)\,dt \overset{d}{=} \mathscr{N}\left(\mu,\sigma^2 \right)
	$$ where $\mu = x+ \int_0^1 (1-s)f(s)\,ds$ and $\sigma^2 = \frac{a^2}{3}$.
	By the observation that $Z_v = 0$ for all $v\ge V_1$, and a change of variable we get
	$$
	\int_0^\infty (2Z_v-c v)Z_v \,dv \overset{d}{=}\mathscr{N}\left(\mu,\sigma^2 \right), 
	$$ which gives the desired claim.
\qed \end{proof}

\subsection{Proof of Theorem \ref{thm:gaussianStructure} }

\begin{proof}[Proof of Theorem \ref{thm:gaussianStructure}]
	
	We observe that given a solution $(Z,C)$ to \eqref{eqn:ZsdeUC}, we can define $Y_t = Z_{V_t}$ where $V_t$ satisfies \eqref{eqn:vdef} and then $Y$ is a weak solution to \eqref{eqn:YsdeUC}. This follows from the proof of Proposition \ref{prop:existZUC}.
	
	Thus
	\begin{equation*}
	dY_t = g(t)\,dB_t+ \left(\frac{c}{Y_t}+f(t) \right)\,dt, \qquad Y_0 = x.
	\end{equation*} We recall that $\displaystyle V_t = \int_0^t \frac{1}{Y_s}\,ds$ since $$
	dV_t = \frac{1}{Z_{V_t}}\,dt = \frac{1}{Y_t}\,dt.
	$$ Also, $V_t = \int_0^t \frac{1}{Y_s}\,ds$ is finite for each $t\in[0,\infty)$ by Lemma \ref{lem:YUCpp}. Then for each $t\ge 0$ we have 
	\begin{align*}
	\int_0^t Y_s\,ds &= xt+ \int_0^t\int_0^s g(r)\,dB_r ds + \int_0^t \int_0^s \left(\frac{c}{Y_r}+f(r)\right)\, dr\,ds\\
	&= xt + \int_0^t \int_0^s g(r)\,dB_r\,ds + \int_0^t cV_r \,dr + \int_0^t \int_0^s f(r)\,dr\,ds.
	\end{align*} Hence
	$$
	\int_0^t (Y_s - c V_s) \,ds = xt + \int_0^t \int_0^s g(r)\,dB_r\,ds + \int_0^t \int_0^s f(r)\,dr\,ds.
	$$
	
	We observe that the left-hand side is equal to 
	$$\int_0^t \left(Z_{V_s} -cV_s\right)\,ds = \int_0^{V_t} (Z_v - cv)Z_v\,dv=: X_t.
	$$
	
	We just need to show that $$
	X_t = xt + \int_0^t \int_0^s f(r)\,dr\,ds + \int_0^t \int_0^s g(r)\,dB_r\,ds
	$$ has the desired mean and covariance structure. It is easy to see
	$$
	\E[X_t] = xt+ \int_0^t \int_0^s f(r)\,dr\,ds = xt + \int_0^t (t-s)f(s)\,ds,
	$$ while the covariance structure follows from the following lemma.
	\qed \end{proof}

\begin{lemma}\label{lem:8}
	Let $g\in L^2_{\loc}(\R_+)$, let $B$ be a standard Brownian motion and define $G(h) = \int_0^h \int_0^ u g(s)\,dB_s\,du.$ Then $G(h) = \int_0^h (h-u)\,g(u)\,dB_u$, and consequently it is a centered Gaussian process such that for $h_1\le h_2$,
	$$
	\E[G(h_1)G(h_2)] = \int_0^{h_1} (h_2-s)(h_1-s)g^2(s)\,ds.
	$$
\end{lemma}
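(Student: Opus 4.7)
The plan is to rewrite the iterated stochastic integral as a single Wiener integral and then read off the Gaussianity and covariance from It\^o's isometry. First, I would note that the double integral can be expressed as an integral against the indicator $1_{\{0 \le s \le u \le h\}}$, so that
\[
G(h) = \int_0^h \!\int_0^u g(s)\,dB_s\,du = \int_0^h \!\int_0^h g(s)\, 1_{\{s \le u\}} \, dB_s\,du.
\]
Applying the (classical, deterministic-integrand) stochastic Fubini theorem, which is justified here since $g \in L^2_{\loc}$ ensures $\int_0^h \int_0^h g(s)^2 1_{\{s \le u\}} \, ds\,du < \infty$, one may interchange the order to obtain
\[
G(h) = \int_0^h g(s) \left( \int_s^h du \right) dB_s = \int_0^h (h-s)\, g(s)\, dB_s,
\]
which is the first claim.

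Next, having expressed $G(h)$ as a Wiener integral of a deterministic $L^2_{\loc}$ kernel with respect to $B$, any finite linear combination $\sum_i \alpha_i G(h_i) = \int_0^{\max h_i} \sum_i \alpha_i (h_i - s)_+ g(s)\, dB_s$ is again a Wiener integral of a deterministic square-integrable kernel, hence Gaussian with mean $0$. This gives that $(G(h))_{h \ge 0}$ is a centered Gaussian process.

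For the covariance, I would apply It\^o's isometry directly: for $h_1 \le h_2$,
\[
\E[G(h_1) G(h_2)] = \E\!\left[ \int_0^{h_1} (h_1 - s) g(s)\,dB_s \cdot \int_0^{h_2} (h_2 - s)g(s)\,dB_s \right] = \int_0^{h_1} (h_1 - s)(h_2 - s) g^2(s)\, ds,
\]
where the truncation to $[0, h_1]$ in the last integral uses that the first factor vanishes for $s > h_1$. The main (and only real) obstacle is the justification of stochastic Fubini; since $g$ is deterministic and locally square-integrable, this falls inside the most elementary version of the theorem, so the verification is routine rather than genuinely difficult.
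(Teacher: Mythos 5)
Your proposal is correct and follows essentially the same route as the paper: apply the stochastic Fubini theorem to convert the iterated integral into a single Wiener integral $\int_0^h (h-s)g(s)\,dB_s$, then invoke It\^o's isometry for the covariance. The paper cites Protter's Theorem 65 for stochastic Fubini and does not spell out the indicator-function reformulation, but the underlying argument is identical.
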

\begin{proof}
	The claim that $G(h) = \int_0^h (h-u) g(u)\,dB_u$ is simply the stochastic Fubini theorem \cite[Theorem 65]{Protter.05}. 
	
	The covariance structure now follows by It\^{o}'s isometry: for any $f_1,f_2\in L^2_{\loc}(\R_+)$
	\begin{align*}
	\E\left[\int_0^{h_1} f_1(s)\,dB_s \int_0^{h_2} f_2(t)\,dB_t\right] &= \int_0^{h_1\wedge h_2} f_1(s)\,f_2(s)\,ds.
	\end{align*} The result follows letting $f_1(u) = (h_1-u)g(u)$ and $f_2(u) = (h_2 - u)g(u)$.
\qed \end{proof}

\end{document}